\documentclass{amsart}
\usepackage{graphicx} 

\usepackage{amsfonts,amsthm,amsmath,amssymb,amscd,mathrsfs}
\usepackage{graphics}
\usepackage{indentfirst}
\usepackage{cite}
\usepackage{latexsym}
\usepackage[dvips]{epsfig}
\usepackage{color, xcolor}
\usepackage{bookmark}

\usepackage{lmodern}
\usepackage{enumerate}

\usepackage{tikz}
\usetikzlibrary{arrows.meta}

\newtheorem{theorem}{Theorem}[section]
\newtheorem{corollary}{Corollary}[section]
\newtheorem{remark}{Remark}[section]

\newtheorem{lemma}[theorem]{Lemma}

\newtheorem{example}{Example}[section]

\newcommand{\bt}{\begin{theorem}}
	\newcommand{\bl}{\begin{lemma}}
		\newcommand{\el}{\end{lemma}}
	\newcommand{\et}{\end{theorem}}

\newcommand{\curl}{\mathrm{curl}}
\newcommand{\te}{\theta}

\newcommand{\bn}{\begin{eqnarray}}
	\newcommand{\en}{\end{eqnarray}}
\newcommand{\bnn}{\begin{eqnarray*}}
	\newcommand{\enn}{\end{eqnarray*}}

\newcommand{\ba}{\begin{aligned}}
	\newcommand{\ea}{\end{aligned}}
\newcommand{\be}{\begin{equation}}
	\newcommand{\ee}{\end{equation}}

\newcommand{\p}{\partial}

\newcommand{\n}{\rho}

\newcommand{\vp}{\varphi}

\newcommand{\Bv}{{\boldsymbol{v}}}

\newcommand{\Bu}{{\boldsymbol{u}}}

\newcommand{\Be}{{\boldsymbol{e}}}
\newcommand{\BF}{{\boldsymbol{F}}}

\newcommand{\Bx}{{\boldsymbol{x}}}

\newcommand{\R}{\mathbb{R}}

\newcommand{\mcC}{\mathcal{C}}

\newcommand{\rmd}{\mathrm{d}}

\title{Forced self-similar solutions to the stationary Navier--Stokes equations in a half-space}

\author{Yun Wang}
\address{School of Mathematical Sciences, Center for dynamical systems and differential equations, Soochow University, Suzhou, China}
\email{ywang3@suda.edu.cn}

\author{Chunjing Xie}
\address{School of Mathematical Sciences, Institute of Natural Sciences, Ministry of Education Key Laboratory of Scientific and Engineering Computing, and CMA-Shanghai, Shanghai Jiao Tong University, 800 Dongchuan Road, Shanghai, China}
\email{cjxie@sjtu.edu.cn}

\author{Shaoheng Zhang}
\address{School of Mathematical Sciences, Soochow University, Suzhou, China}
\email{20234007008@stu.suda.edu.cn}

\begin{document}

\begin{abstract}
We study axisymmetric self-similar solutions to the stationary Navier--Stokes equations in the half-space with the no-slip boundary condition, driven by an axisymmetric (-3)-homogeneous external force.
If the tangential curl of the force on the unit sphere is sufficiently small, we prove the existence of a unique small solution; when the force is swirl-free, the solution is automatically swirl-free and unique.
For a swirl-free external force $\boldsymbol{F}$, we introduce a scaling parameter $\lambda$ and consider the system with force $\lambda\boldsymbol{F}$; we prove that solutions exist precisely for $\lambda$ in an open interval containing zero. 
The same approach extends to solid cones with the no-slip boundary condition, where narrower opening angles allow the existence of solutions under  larger external forces.
\end{abstract}

\maketitle

\section{Introduction}

\subsection{Problem and scaling}

We study the stationary incompressible Navier--Stokes equations
in the three-dimensional upper half-space
\[
\mathbb R^3_+
:=
\{x=(x_1,x_2,x_3)\in\mathbb R^3:x_3>0\},
\]
subject to the no-slip boundary condition:
\begin{equation}\label{eqn:NSEhfD}
	\left\{
	\begin{aligned}
		-\Delta \Bu+(\Bu\cdot\nabla)\Bu+\nabla p
		&=\BF
		&&\text{in }\mathbb R^3_+,\\
		\operatorname{div}\Bu
		&=0
		&&\text{in }\mathbb R^3_+,\\
		\Bu
		&=0
		&&\text{on }\partial\mathbb R^3_+\setminus\{0\}.
	\end{aligned}
	\right.
\end{equation}
Here $\Bu$, $p$, and $\BF$ denote the velocity, pressure, and
external force, respectively. The origin is excluded from the
boundary condition because the homogeneous solutions considered
below may have a singularity there.

The system is invariant under the scaling
\[
\Bu_\lambda(x)=\lambda\Bu(\lambda x),
\qquad
p_\lambda(x)=\lambda^2p(\lambda x),
\qquad
\BF_\lambda(x)=\lambda^3\BF(\lambda x).
\]
A solution is called self-similar if
$(\Bu_\lambda,p_\lambda)=(\Bu,p)$ for every $\lambda>0$.
Equivalently,
\[
\Bu(\lambda x)=\lambda^{-1}\Bu(x),
\qquad
p(\lambda x)=\lambda^{-2}p(x).
\]
Consequently, a scale-invariant external force must satisfy
\[
\BF(\lambda x)=\lambda^{-3}\BF(x).
\]

We study the stationary incompressible Navier--Stokes equations
in the three-dimensional upper half-space
\[
\mathbb R^3_+:=\{x=(x_1,x_2,x_3)\in\mathbb R^3:x_3>0\},
\]
subject to the no-slip boundary condition:


The study of self-similar solutions of the stationary Navier--Stokes equations has a long history and plays a fundamental role in understanding the local and global structures of fluid flows. Such solutions naturally appear as leading-order asymptotic descriptions of jets, wakes, and vortical motions near singularities or at large distances \cite{DG00, DI17, GPS96, JS26, KS11, MT12}. 
More recently, self-similar solutions have provided crucial insights into the regularity theory, the Liouville-type properties, and the asymptotic behavior of solutions in various domains. In the following subsections, we briefly recall the main achievements in this area, focusing on the whole space, the half-space, and the influence of external forces.

\subsection{Related results}

Homogeneous stationary solutions play an important role in the
analysis of singularities and far-field asymptotics for the
Navier--Stokes equations. In three dimensions, the classical Landau
solutions form a one-parameter family of axisymmetric, swirl-free,
$(-1)$-homogeneous solutions in
$\mathbb R^3\setminus\{0\}$; see~\cite{Landau44}.
Their classification among axisymmetric solutions was obtained in
\cite{TianXin98,CK2004}, and \v{S}ver\'ak~\cite{Sverak11} proved
that every smooth $(-1)$-homogeneous solution in
$\mathbb R^3\setminus\{0\}$ is a Landau solution, without assuming
axisymmetry. Homogeneous axisymmetric solutions that may be singular
at the poles were further investigated in
\cite{LLY18I,LLY18II,LLY19III}.

The corresponding problem with a scale-invariant external force was
studied by Shi~\cite{Shi18}. For sufficiently small axisymmetric
$(-3)$-homogeneous forces satisfying a compatibility condition, a
family of self-similar solutions was constructed in the whole space.
In the swirl-free case, the admissible amplitude of the force is
related to the sign of an explicit discriminant. Results in higher
dimensions and for weaker notions of solutions can be found in
\cite{FR96,Tsai98,BGLWX23,BGLWX25}.

The presence of a boundary changes the problem substantially. For
the unforced stationary Navier--Stokes equations in the half-space,
Kang, Miura, and Tsai~\cite[Theorem~5.1]{KMT18} proved that every
axisymmetric $(-1)$-homogeneous solution satisfying the no-slip or
Navier-slip boundary condition is identically zero. Therefore, in
contrast with the whole-space Landau family, a nontrivial
axisymmetric homogeneous flow in the half-space must be sustained by
a non-potential external force.

\subsection{Main results for the half-space with external force}

The discussion above leads to a natural question: given a $(-3)$-homogeneous external force $\BF$, does the half-space admit a corresponding self-similar solution? 
In this paper we investigate this problem under the additional assumption that $\BF$ is axisymmetric, and we search for axisymmetric self-similar solutions.
Our first result shows that a unique axisymmetric self-similar solution exists whenever the tangential curl of the external force is sufficiently small.

\begin{theorem}\label{thm:A}
Let $\BF$ be an axisymmetric vector field of class $C^1$, homogeneous of degree $-3$ on $\mathbb{R}^3_+$, and let $(\curl\,\BF)^{\tan}$ denote the tangential component of $\curl\,\BF$ on the unit sphere $\mathbb{S}^2$.
Then there exists $\varepsilon>0$ such that if
\[
\|(\curl\,\BF)^{\tan}\|_{C^0(\mathbb{S}^2\cap\mathbb{R}^3_+)} \le \varepsilon,
\]
the system \eqref{eqn:NSEhfD} admits a unique \emph{small axisymmetric self-similar solution in the bounded angular-profile class}. More precisely,
\[
\Bu\in C^{2,\alpha}_{\mathrm{loc}}(\overline{\mathbb R^3_+}\setminus\{0\}),
\qquad p\in C^{1,\alpha}_{\mathrm{loc}}(\overline{\mathbb R^3_+}\setminus\{0\})
\]
for every $\alpha\in(0,1)$.

Moreover, if $\BF$ has no swirl, then any axisymmetric self-similar solution is necessarily swirl-free and unique.
\end{theorem}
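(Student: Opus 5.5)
The approach is to reduce \eqref{eqn:NSEhfD} to a system of ODEs on the polar angle, in the spirit of the Landau and Shi computations.

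\textbf{Reduction to profiles.} In spherical coordinates $(r,\theta,\varphi)$ with $\theta\in(0,\pi/2]$ and $x=\cos\theta$, $\theta=\pi/2$ corresponding to $\partial\mathbb R^3_+$, write an axisymmetric $(-1)$-homogeneous divergence-free field as
\[
\Bu=\frac{1}{r}\Big(u_r(\theta)\Be_r+u_\theta(\theta)\Be_\theta+u_\varphi(\theta)\Be_\varphi\Big).
\]
Divergence-freeness gives $u_r=-\frac{d}{dx}\big(\sin\theta\,u_\theta\big)$. We therefore introduce the stream profile $\psi(x)=\sin\theta\,u_\theta$ and the swirl profile $\Gamma(x)=\sin\theta\,u_\varphi$.

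The pressure is removed by taking the curl. The $\Be_\varphi$ component of $\curl\BF$ and the swirl equation then yield a coupled system:
\begin{itemize}
\item a third-order ODE for $\psi$ on $x\in[0,1)$, which integrates once to the Landau-type equation
\[
(1-x^2)\psi'+2x\psi+\tfrac12\psi^2=\int\!\!\int\Phi\,dx\,dx+\text{(swirl term)}+c_1x^2+c_2x+c_3,
\]
where $\Phi$ is the profile of the tangential curl of $\BF$;
\item a second-order linear (in $\Gamma$) equation $(1-x^2)\Gamma''+\psi\Gamma'=G$, where $G$ comes from $(\curl\BF)^{\tan}\cdot\Be_r$-type components.
\end{itemize}
Only $(\curl\BF)^{\tan}$ enters, since the gradient part of $\BF$ is absorbed into $p$. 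The pressure is recovered afterwards by integrating the radial momentum equation, which gives $p\in C^{1,\alpha}$.

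The boundary conditions are as follows. At the axis $x=1$ we require regularity: $\psi(1)=0$ and $\Gamma(1)=0$. At $x=0$ the no-slip condition gives $\psi(0)=\psi'(0)=0$ and $\Gamma(0)=0$. The constants $c_i$ are fixed by these conditions.

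\textbf{Existence for small forcing.} The linearization of the system at $(\psi,\Gamma)=0$ is
\[
L\psi:=(1-x^2)\psi'+2x\psi, \qquad \Gamma\mapsto(1-x^2)\Gamma''.
\]
The first step is to show that the linear boundary value problem with these boundary conditions is uniquely solvable. The tool is explicit integrating factors: $L\psi=h$ is solved by
\[
\psi=(1-x^2)\int\frac{h}{(1-x^2)^2}.
\]
From this we obtain bounded inverse maps on a weighted space $X$ of bounded angular profiles, for instance $\|\psi\|_X=\sup|\psi|/(1-x^2)+\sup|\psi'|$, with a similar norm for $\Gamma$ that ensures smoothness at the axis. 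The nonlinear terms $\tfrac12\psi^2$, the swirl coupling $\int\Gamma^2$-type term, and $\psi\Gamma'$ are quadratic and map $X\times X$ into the data space.

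The contraction mapping principle on a small ball of radius $C\varepsilon$ then gives existence and uniqueness among small profiles. Elliptic (Schauder) regularity for the Stokes system away from the origin upgrades the solution to $C^{2,\alpha}_{\rm loc}(\overline{\mathbb R^3_+}\setminus\{0\})$.

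\textbf{Swirl-free case.} If $\BF$ has no swirl, then $G=0$ and the $\Gamma$ equation is the homogeneous equation $(1-x^2)\Gamma''+\psi\Gamma'=0$ with $\Gamma(0)=\Gamma(1)=0$. The maximum principle for this second-order ODE gives $\Gamma\equiv0$ for any $\psi$, not only small ones, so every solution is swirl-free.

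Uniqueness then reduces to the Riccati-type first-order equation for $\psi$:
\[
(1-x^2)\psi'+2x\psi+\tfrac12\psi^2=R(x)+c_1x^2+c_2x+c_3.
\]
Suppose there are two solutions. Their difference $w$ satisfies
\[
(1-x^2)w'+\Big(2x+\tfrac12(\psi_1+\psi_2)\Big)w=\text{quadratic in }x.
\]
We combine this with the three boundary constraints and a comparison (Sturm/monotonicity) argument for Riccati equations in order to force $w\equiv0$.

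\textbf{Main obstacle.} The hardest step is this global uniqueness without smallness, which is presumably what the ``$\lambda$ interval'' analysis later in the paper develops. I would first try to show that the boundary conditions $\psi(0)=\psi'(0)=0$ and $\psi(1)=0$ determine the constants $c_i$ monotonically from a shooting parameter. The singular behaviour at $x=1$, where the coefficient $1-x^2$ degenerates, must be handled carefully to show that the regular solution there is a one-parameter family.
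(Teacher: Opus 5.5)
Your existence argument is essentially the paper's. The integrating factor $(1-x^2)^{-2}$ is exactly the substitution $M=L/(1-t^2)$ that turns the once-integrated equation into $\frac{\rmd M}{\rmd t}+\frac12M^2=N$, and a contraction for small data gives existence and uniqueness among small profiles. Your monotonicity argument for $\Gamma\equiv0$ is also the paper's argument: $H=h'+h\cot\vp$ solves $H'=gH$, so it cannot change sign.

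One correction on the swirl equation. In the paper's notation, its forcing is $-Z\sin\vp$, i.e.\ the azimuthal force itself. This is controlled by the tangential component $2Z$ of $\curl\BF$, not by a radial component of the curl. The radial component $(Z\sin\vp)'/\sin\vp$ is not controlled by the hypothesis, so you must work with the azimuthal momentum equation itself, not its curl.

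The genuine gap is the final assertion of the statement: uniqueness of \emph{every} axisymmetric self-similar solution when $\BF$ is swirl-free. You leave this open and propose shooting, Sturm comparison, and a study of a one-parameter family at $x=1$. None of that is needed. The missing observation is that, once $\Gamma\equiv0$, the constants $c_1,c_2,c_3$ are determined by $\BF$ alone, through conditions that do not involve $\psi$:
\begin{itemize}
\item the right-hand side must vanish at $x=0$, from $\psi(0)=\psi'(0)=0$;
\item it must vanish to second order at $x=1$, from $\psi(1)=0$ together with boundedness of the radial profile near the axis.
\end{itemize}
A nonzero first-order term at $x=1$ produces precisely a logarithmic divergence of $M=\psi/(1-x^2)$ and of the radial profile. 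That divergence is your would-be free parameter, and it is excluded in the bounded-profile class (your own norm $\sup|\psi|/(1-x^2)$ already builds this in).

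In the paper this corresponds to two facts. The integration constants vanish by the axis conditions and $L(1)=0$. The pressure constant $\mcC=-\int_0^{\pi/2}G\sin\tau\cos\tau\,\rmd\tau$ is forced by $\int_0^1J=0$ and involves only $X,Y$. Hence $N\in C^0[0,1]$ is a fixed function, and the problem is the initial value problem $\frac{\rmd M}{\rmd t}+\frac12M^2=N$, $M(0)=0$. Uniqueness then follows from Picard--Lindel\"of, with no smallness assumption.

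In your own notation: two solutions share the same $c_i$. So $w=\psi_1-\psi_2$ solves the homogeneous linear equation $(1-x^2)w'+\bigl(2x+\tfrac12(\psi_1+\psi_2)\bigr)w=0$ with $w(0)=0$, and hence $w\equiv0$ on $[0,1)$.

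Your guess that the $\lambda$-interval analysis supplies this step is also off. Theorem~\ref{thm:B} concerns whether the unique Riccati solution survives up to $t=1$, which is a question of existence, not uniqueness.
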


\begin{remark}[Whole-space compatibility condition]
In the whole space $\mathbb{R}^3\setminus\{0\}$, write the axisymmetric $(-3)$-homogeneous external force in spherical coordinates as
$\BF = \rho^{-3}\bigl(X(\varphi)\Be_{\rho} + Y(\varphi)\Be_{\varphi} + Z(\varphi)\Be_{\theta}\bigr)$.
The existence of a self-similar solution 
requires the following compatibility condition:
\[
\int_0^\pi X(\varphi)\sin\varphi\cos\varphi\,\mathrm{d}\varphi
= \int_0^\pi Y(\varphi)\sin^{2}\!\varphi\,\mathrm{d}\varphi .
\]
This condition is equivalent to condition~(1.4) in \cite{Shi18}.
In the half-space (or on a solid cone), no such 
compatibility condition on the external force is needed.
\end{remark}

\begin{remark}
Theorem~\ref{thm:A} guarantees the existence of a solution when the tangential curl is small.
As a limiting case, if $\|(\curl\,\BF)^{\tan}\|_{C^0(\mathbb{S}^2\cap\mathbb{R}^3_+)}=0$, then the whole curl vanishes and the only possible self-similar solution is $\Bu\equiv0$.
Indeed, writing the axisymmetric force as $\BF=\rho^{-3}(X\Be_\rho+Y\Be_\varphi+Z\Be_\theta)$ and using \eqref{eqn:sphericalcurl},
\[
\nabla\times\BF = \frac{1}{\rho^4}\Bigl( \frac{(Z\sin\varphi)'}{\sin\varphi}\,\Be_\rho + 2Z\,\Be_\varphi - (X'+2Y)\,\Be_\theta \Bigr).
\]
Hence $\|(\curl\,\BF)^{\tan}\|_{C^0}=0$ forces $Z=0$ and $X'+2Y=0$, so $\curl\,\BF=0$.
Because $\mathbb R^3_+$ is simply connected and $\BF$ is curl-free,
there exists a $(-2)$-homogeneous scalar potential $\widetilde p$
such that $\BF=\nabla\widetilde p$(see \cite[p.~10]{Tsai18}); thus the external force can be absorbed into the pressure gradient.
The system reduces to the homogeneous Navier--Stokes equations, and by the Liouville-type result in \cite[Theorem~5.1]{KMT18} the only admissible solution under the no-slip boundary condition is $\Bu\equiv0$.
\end{remark}

The above remark shows that a vanishing tangential curl forces the whole curl to be zero.  
However, smallness of the tangential curl does \emph{not} imply smallness of the full curl: the radial component
$\frac{(Z\sin\varphi)'}{\sin\varphi}$ can be large while the tangential components remain tiny.  
The following example exhibits a family of smooth axisymmetric forces for which the $C^{0}$-norm of the tangential curl tends to zero, yet the full curl blows up in $C^{0}$.

\begin{example}
Fix a smooth bump function $\phi\in C^{\infty}_{c}(\R)$ with $\operatorname{supp}\phi\subset[-1,1]$,
$0\le\phi\le1$, and $\phi(0)=1$.  For $\varepsilon,\delta>0$ set
\[
Z_{\varepsilon,\delta}(\varphi)=\varepsilon\,\phi\Bigl(\frac{\varphi-\frac{\pi}{4}}{\delta}\Bigr),\qquad
X_{\varepsilon,\delta}=Y_{\varepsilon,\delta}\equiv0,
\]
and take $\delta<\frac{\pi}{8}$ so that $\operatorname{supp}Z_{\varepsilon,\delta}\subset(\frac{\pi}{8},\frac{3\pi}{8})$.
Then $\BF_{\varepsilon,\delta}= \rho^{-3}Z_{\varepsilon,\delta}\,\Be_{\theta}$ is smooth and axisymmetric.
Formula \eqref{eqn:sphericalcurl} yields
\[
(\curl\,\BF_{\varepsilon,\delta})^{\tan}= \frac{2Z_{\varepsilon,\delta}}{\rho^{4}}\,\Be_{\varphi},
\qquad
\curl\,\BF_{\varepsilon,\delta}= \frac{1}{\rho^{4}}\Bigl( \frac{(Z_{\varepsilon,\delta}\sin\varphi)'}{\sin\varphi}\,\Be_{\rho}
+ 2Z_{\varepsilon,\delta}\,\Be_{\varphi} \Bigr).
\]
Since $\cot\varphi$ is bounded on the support of $Z_{\varepsilon,\delta}$,
\[
\biggl\|\frac{(Z_{\varepsilon,\delta}\sin\varphi)'}{\sin\varphi}\biggr\|_{C^{0}}
\ge \|Z_{\varepsilon,\delta}'\|_{C^{0}} - \|Z_{\varepsilon,\delta}\cot\varphi\|_{C^{0}}
\ge \frac{\varepsilon}{\delta}\|\phi'\|_{C^{0}} - C\varepsilon.
\]
Now choose $\varepsilon=\delta^{1/2}$ and let $\delta\to0^{+}$.  Then
\[
\|(\curl\,\BF_{\varepsilon,\delta})^{\tan}\|_{C^{0}(\mathbb{S}^{2}\cap\R^{3}_{+})}
= 2\|Z_{\varepsilon,\delta}\|_{C^{0}} = 2\delta^{1/2}\to0,
\]
while
\[
\|\curl\,\BF_{\varepsilon,\delta}\|_{C^{0}(\mathbb{S}^{2}\cap\R^{3}_{+})}
\gtrsim \delta^{-1/2}\to\infty.
\]
Thus the tangential part of the curl can be arbitrarily small without forcing the whole curl to be small,
and Theorem~\ref{thm:A} applies to forces whose full curl may be large.
\end{example}

Theorem~\ref{thm:A} proves the existence of a unique self-similar solution for sufficiently small forces, which raises the question of what happens when the force is large or its magnitude varies. To understand the dependence on the force magnitude in the swirl-free case, we introduce a parameter $\lambda$ and replace $\BF$ by $\lambda\BF$.  The following theorem shows that solutions exist exactly for $\lambda$ in an open interval $(\underline{\lambda},\overline{\lambda})$ containing $0$, and the finiteness of the endpoints is determined by the sign of an explicit discriminant.

\begin{theorem}
\label{thm:B}
Let $\BF$ be an axisymmetric, swirl-free external force on $\mathbb{R}^{3}_{+}$, of class $C^{1}$ and homogeneous of degree $-3$.
There exist constants $\underline{\lambda}<0<\overline{\lambda}$ (possibly $\pm\infty$) with the following property: for every $\lambda\in(\underline{\lambda},\overline{\lambda})$ the Navier--Stokes equations \eqref{eqn:NSEhfD} with external force $\lambda\BF$ admit a unique axisymmetric self-similar solution
$\Bu\in C^{2,\alpha}_{\mathrm{loc}}(\overline{\mathbb R^3_+}\setminus\{0\})$, $p\in C^{1,\alpha}_{\mathrm{loc}}(\overline{\mathbb R^3_+}\setminus\{0\})$ for every $\alpha\in(0,1)$, whereas no such bounded-profile solution exists for $\lambda\notin(\underline{\lambda},\overline{\lambda})$.

In spherical coordinates we write $\BF = \rho^{-3}\bigl( X(\varphi)\Be_{\rho} + Y(\varphi)\Be_{\varphi} \bigr)$.
Then the finiteness of $\underline{\lambda}$ and $\overline{\lambda}$ is determined by the sign of the discriminant
\begin{align}\label{eqn:disfcn}
\mathcal{N}(\vp)=\int_{0}^{\vp}
\bigl(G(\tau)+2\mathcal{C}\bigr)\,\sin\tau\,(\cos\vp-\cos\tau)\,\rmd\tau,
\quad \vp \in \big[0,\tfrac{\pi}{2}\big],
\end{align}
where $G(\vp)=X(\vp)+2\int_{0}^{\vp} Y(\tau)\,\rmd\tau$ and $\mathcal{C}=-\int_{0}^{\frac{\pi}{2}}G(\tau)\sin\tau\cos\tau\,\rmd\tau$. More precisely:
\begin{itemize}
\item if $\mathcal{N}\ge0$ and $\mathcal{N}\not\equiv0$, then $\underline{\lambda}\in(-\infty,0)$ and $\overline{\lambda}=+\infty$;
\item if $\mathcal{N}\le0$ and $\mathcal{N}\not\equiv0$, then $\underline{\lambda}=-\infty$ and $\overline{\lambda}\in(0,+\infty)$;
\item if $\mathcal{N}$ changes sign, then $\underline{\lambda}\in(-\infty,0)$ and $\overline{\lambda}\in(0,+\infty)$;
\item if $\mathcal{N}\equiv0$, then $\underline{\lambda}=-\infty$ and $\overline{\lambda}=+\infty$.
\end{itemize}
\end{theorem}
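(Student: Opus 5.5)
The plan is to reduce the problem to a scalar Riccati equation on the profile interval, linearize it by a Cole--Hopf substitution, and read off both existence and the endpoints $\underline\lambda,\overline\lambda$ from positivity of solutions of the resulting linear equation, i.e. from a principal-eigenvalue problem with the indefinite weight $\mathcal N$.

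\emph{Step 1: reduction.} Since $\BF$ is swirl-free, the second part of Theorem~\ref{thm:A} shows that every axisymmetric self-similar solution is swirl-free. Write
\[
\Bu=\rho^{-1}\bigl(u_\rho(\vp)\Be_\rho+u_\vp(\vp)\Be_\vp\bigr),\qquad x=\cos\vp\in[0,1].
\]
Introduce the Stokes stream function $\psi(x)$ with $u_\vp=-\psi/\sin\vp$ and $u_\rho=\psi'$. The $\Be_\rho$ and $\Be_\vp$ components of the momentum equation can be combined to eliminate the pressure, exactly as in the Landau/Shi computation. Integrating the result three times in $x$, I expect to obtain
\begin{equation*}
(1-x^2)\psi'+2x\psi+\tfrac12\psi^2=\lambda\,\Phi(x)+c_1x^2+c_2x+c_3 ,
\end{equation*}
where $\Phi$ is an explicit double primitive of $G$ (the combination $X+2\int Y$ is the part of the force that does not reduce to a pressure gradient). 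The no-slip condition at $\vp=\pi/2$ gives $\psi(0)=\psi'(0)=0$, and regularity on the axis gives $\psi(1)=0$. Together with the normalization of $\Phi$, these conditions fix $c_1,c_2,c_3$. This is where $\mathcal C$ enters and why no compatibility condition is required: the half-space supplies one extra free constant compared with $\mathbb R^3$. With this bookkeeping the right-hand side should be $\lambda(1-x^2)\,h(x)$, where $h$ is essentially $\mathcal N$ expressed in the variable $x$.

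\emph{Step 2: linearization.} Substitute $\psi=2(1-x^2)\,w'/w$. The Riccati equation then becomes a linear second-order equation
\[
-\bigl((1-x^2)w'\bigr)'+\dots=\lambda\,q(x)\,w ,
\]
with the weight $q$ tied to $\mathcal N$, and the boundary conditions translate into Neumann/regularity conditions for $w$ at $x=0$ and $x=1$. A bounded profile $\psi$ exists, and is unique, if and only if the solution $w$ with the prescribed normalization stays strictly positive on $[0,1]$. Uniqueness therefore follows from uniqueness for the linear initial value problem, and regularity follows by bootstrapping the ODE, with the pressure recovered from the $\Be_\rho$ equation.

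\emph{Step 3: the admissible set of $\lambda$.} Positivity of $w$ amounts to $\lambda$ lying strictly between the principal eigenvalues of the indefinite-weight problem $-Lw=\lambda q w$. By Sturm comparison (or a Rayleigh quotient argument) the set of admissible $\lambda$ is an open interval containing $0$, since $w\equiv1$ at $\lambda=0$. The interval is open because positivity is preserved under small perturbations. Beyond an endpoint $w$ acquires a zero, so $\psi$ blows up and no bounded-profile solution exists.

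The positive principal eigenvalue is finite if and only if the Rayleigh quotient $\int q\,w^2$ can be made positive, that is, if $q$, equivalently $\mathcal N$, has a nontrivial positive part; symmetrically for the negative endpoint. This yields the four cases of the theorem. In particular, $\mathcal N\equiv0$ corresponds to $q\equiv0$, hence $\psi\equiv0$ for all $\lambda$, and $\underline\lambda=-\infty$, $\overline\lambda=+\infty$.

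\emph{Main obstacle.} The delicate step is Step 1 together with the identification in Step 3: showing precisely that the effective weight is $\mathcal N$, rather than $G+2\mathcal C$ itself, and that the sign conditions are matched correctly to $\underline\lambda$ and $\overline\lambda$. I would attempt this by integrating the relevant quadratic form by parts twice, using the kernel $\sin\tau(\cos\vp-\cos\tau)$. A further technical point is the degenerate coefficient $1-x^2$ at $x=1$, where I would work with the regular Frobenius branch.
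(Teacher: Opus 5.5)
\textbf{Gap.} Your Cole--Hopf route can be made to work, and it differs from the paper. The paper never linearizes. It works directly with $M'+\tfrac12M^2=\lambda N$, using an arctan blow-down estimate for large $|\lambda|$, lower semicontinuity of $\omega_\lambda$, the scaled supersolution $\lambda M_{\overline\lambda}/\overline\lambda$, and a quantitative blow-down argument at the endpoints.

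\textbf{The weight has the wrong sign.} Take $\psi$ to be the paper's $L=g\sin\varphi$; if $u_\rho=\psi'$, your displayed equation forces $u_\varphi=+\psi/\sin\varphi$, not $-\psi/\sin\varphi$. The right-hand side of the first-order equation is then exactly $\lambda\mathcal N$ in the variable $x=\cos\varphi$. This takes one integration by parts, not a quadratic-form manipulation. Your substitution gives $(1-x^2)\psi'+2x\psi+\tfrac12\psi^2=2(1-x^2)^2w''/w$. So the linear problem is $w''=\tfrac{\lambda}{2}Nw$ with $w(0)=1$, $w'(0)=0$ and $N=\mathcal N/(1-x^2)^2$. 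Since $\mathcal N$ vanishes to second order at $x=1$, $N$ lies in $C^0[0,1]$, so there is no degenerate coefficient and no Frobenius branch to choose. Written as $-w''=\lambda qw$, the weight is $q=-\tfrac12N$, which has the sign of $-\mathcal N$. Your criterion ``$\overline\lambda<\infty$ iff $q$, equivalently $\mathcal N$, has a nontrivial positive part'' therefore interchanges $\underline\lambda$ and $\overline\lambda$ in all four cases. In fact $\overline\lambda<\infty$ iff $\mathcal N<0$ somewhere, since then $\lambda N\ll0$ drives $M=2w'/w$ to $-\infty$.

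\textbf{The eigenvalue problem needs a Dirichlet condition at the axis.} The endpoints are detected by the mixed problem $w'(0)=0$, $w(1)=0$, because $M$ fails to reach $t=1$ exactly when $w$ vanishes in $(0,1]$. A ``regularity'' condition at $x=1$ imposes nothing, since $x=1$ is a regular point. With a Neumann condition there, $\lambda=0$ would itself be an eigenvalue (eigenfunction $w\equiv1$), so $0$ could not lie strictly between two principal eigenvalues.

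\textbf{Sturm comparison does not give the interval property.} When $N$ changes sign, $\lambda N$ is not monotone in $\lambda$, so direct comparison in $\lambda$ fails. The variational version works:
\begin{itemize}
\item On $V=\{v\in H^1(0,1):v(1)=0\}$ set $Q_\lambda(v)=\int_0^1(v')^2+\tfrac{\lambda}{2}\int_0^1Nv^2$.
\item If $w_\lambda>0$ on $[0,1]$, Picone's identity gives $Q_\lambda(w_\lambda\eta)=\int_0^1w_\lambda^2(\eta')^2$. The boundary term $[w_\lambda w_\lambda'\eta^2]_0^1$ vanishes because $w_\lambda'(0)=0$ and $\eta(1)=0$. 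Hence $Q_\lambda>0$ on $V\setminus\{0\}$.
\item If $w_\lambda$ has a first zero $x_0\in(0,1]$, then $w_\lambda$ truncated at $x_0$ lies in $V$ and has $Q_\lambda=0$.
\item For each $v$, $Q_\lambda(v)$ is affine in $\lambda$ with $Q_0(v)>0$. So the admissible set is an interval containing $0$, and it is open by continuous dependence on $\lambda$.
\item Bump functions supported where $N<0$ (resp.\ $N>0$) give $\overline\lambda<\infty$ (resp.\ $\underline\lambda>-\infty$). If $N\ge0$, then $Q_\lambda(v)\ge\int_0^1(v')^2$ for $\lambda\ge0$, so $\overline\lambda=+\infty$; symmetrically for $\underline\lambda$.
\item At a finite endpoint, $w\ge0$ cannot vanish in the interior, since that would be a double zero. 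Hence $w(1)=0<w$ on $[0,1)$. This is the paper's exact blow-up lemma, obtained for free.
\end{itemize}

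With these corrections, your linear route is a complete and arguably cleaner alternative to the paper's nonlinear comparison lemmas, and it transfers to cones unchanged.
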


\begin{remark}[Blow-up location]
As proved in Lemma~\ref{lm:exactblow}, a finite $\underline{\lambda}$ or $\overline{\lambda}$ corresponds to a solution that blows up exactly at the north pole $\varphi=0$; that is, it cannot be extended continuously to the symmetry axis.
\end{remark}

If $\mathcal{N}\equiv0$, then \eqref{eqn:disfcn} forces $G+2\mathcal{C}\equiv0$; hence $G$ is constant, and
$\nabla\times\BF = -\rho^{-4}(X'+2Y)\Be_\theta = 0$. Therefore the force is a gradient and can be absorbed into the pressure. Then only the trivial solution exists (see \cite[Theorem 5.1]{KMT18}), corresponding to $\underline{\lambda}=-\infty$ and $\overline{\lambda}=+\infty$ in Theorem~\ref{thm:B}. For the non-trivial cases where $\mathcal{N}\not\equiv0$, the sign of $\mathcal{N}$ determines the three possible behaviours of the discriminant, which are illustrated by the following examples.

\begin{example}\label{example:N}
For $G(\varphi)=X(\varphi)+2\int_0^\varphi Y(\tau)\,\mathrm{d}\tau$ and $\mcC=-\int_0^{\frac{\pi}{2}} G(\tau)\sin\tau\cos\tau\,\mathrm{d}\tau$, the choices
\begin{itemize}
\item $G(\varphi)=\mp\cos(2\varphi)$ give $\mcC=0$ and
  $\mathcal{N}(\varphi)=\pm\frac{1}{6}\cos\varphi(\cos\varphi+2)(1-\cos\varphi)^2$, strictly positive/negative on $(0,\tfrac{\pi}{2})$;
\item $G(\varphi)=\cos(4\varphi)$ gives $\mcC=\frac16$ and
  $\mathcal{N}(\varphi)=\frac{2}{15}\cos\varphi(1-\cos\varphi)^2\bigl(-2\cos^3\varphi-4\cos^2\varphi-\cos\varphi+2\bigr)$, which changes sign.
\end{itemize}
These illustrate the three non-trivial cases in Theorem~\ref{thm:B}.
\end{example}

\subsection{Main ideas of the proofs}

For an axisymmetric self-similar velocity field, we write
\[
\Bu(\rho,\theta,\varphi)
=
\frac{1}{\rho}
\bigl(
f(\varphi)\Be_\rho
+
g(\varphi)\Be_\varphi
+
h(\varphi)\Be_\theta
\bigr).
\]
The divergence-free condition expresses the radial component
$f$ in terms of the meridional component $g$. After eliminating the
pressure, the swirl-free system can be integrated and transformed,
through
\[
t=\cos\varphi,
\qquad
L(t)=g(\varphi)\sin\varphi,
\]
into a second-order scalar equation. A further normalization reduces
it to the Riccati initial-value problem
\[
M'(t)+\frac12M(t)^2=\lambda N(t),
\qquad M(0)=0.
\]
Existence of a bounded angular profile is equivalent to continuation
of this Riccati solution up to $t=1$. Comparison arguments then show
that the admissible set of amplitudes is an open interval, while the
sign of the discriminant determines whether either endpoint is
finite.

When swirl is present, the azimuthal equation is first solved for
$h$ in terms of $g$ and the swirl component of the force. The
remaining equations define a nonlinear map for $g$. For sufficiently
small tangential curl, this map is a contraction on a bounded ball
of angular profiles. The cone case follows from the same construction
after keeping track of the dependence of the estimates on the
opening angle.

\subsection{Outline} 
The remainder of the paper is organized as follows.  
Section~\ref{sec:Pre} introduces the  spherical coordinates and basic notation.  
In Section~\ref{sec:No-Swirl} we reduce the swirl-free case to a Riccati equation and studies its maximal interval of existence.  
Section~\ref{sec:withswirl} treats forces with swirl by a contraction argument.  
Finally, Section~\ref{sec:Discussions} extends the results to solid cones and analyzes the dependence on the opening angle.

\section{Preliminaries}\label{sec:Pre}
Recall that the spherical coordinates of $\R^3$ are $\rho,\te,\vp$ with 
\[
\Bx=(\rho\sin \vp \cos \te,\rho \sin \vp \sin \te, \rho \cos \vp ),
\]
and the corresponding orthonormal basis vectors are
\begin{align} \label{eqn:sphericalbasis}
\Be_{\rho}=\frac{\Bx}{\rho}, \quad \Be_{\te}=(-\sin \te, \cos \te,0), \quad \Be_{\vp}=\Be_{\te}\times \Be_{\rho}.
\end{align}
Here $\rho=|\Bx|$, $\te \in (0,2\pi)$ is the azimuthal angle, and $\vp\in (0,\pi)$ is the polar angle.
In this coordinate system, a vector-valued function $\Bv$ admits the representation
\begin{equation}\label{eqn:sphericalrep}
\Bv = v^\rho(\rho,\vp,\te) \Be_\rho + v^\vp(\rho,\vp,\te) \Be_\vp + v^\te(\rho,\vp,\te) \Be_\te, 
\end{equation}
where $v^\theta$ is known as the {\it swirl} component. We say $\Bv$ is {\it axisymmetric} if it is of the form
\[
\Bv = v^\rho(\rho,\vp) \Be_\rho + v^\vp(\rho,\vp) \Be_\vp + v^\te(\rho,\vp) \Be_\te. 
\]

Let $\Bv$ be a vector field expressed in \eqref{eqn:sphericalrep}. Then the curl  of $\Bv$ is given by
\begin{equation}\label{eqn:sphericalcurl}
\begin{aligned}
\curl \, \Bv=
\frac{1}{\rho\sin\vp}
\left(
\frac{\p(v^{\te}\sin\vp)}{\p\vp}-\frac{\p v^{\vp}}{\p\te}
\right)
\Be_{\rho}
+\frac{1}{\rho}
\left(\frac{1}{\sin\vp}\frac{\p v^{\rho}}{\p\te}-\frac{\p(\rho v^{\te})}{\p\rho}\right)\Be_{\vp}
+\frac{1}{\rho}
\left(\frac{\p(\rho v^{\vp})}{\p\rho}-\frac{\p v^{\rho}}{\p\vp}\right)\Be_{\te},  
\end{aligned}
\end{equation}
see, for example, \cite[eq.~(2.47)]{AW95}.
If $\Bv$ is axisymmetric, then
\[
\curl\,\Bv = \frac{1}{\rho\sin\vp}\frac{\p(v^{\te}\sin\vp)}{\p\vp}\Be_{\rho}
- \frac{1}{\rho}\frac{\p(\rho v^{\te})}{\p\rho}\Be_{\vp}
+ \frac{1}{\rho}\Bigl(\frac{\p(\rho v^{\vp})}{\p\rho} - \frac{\p v^{\rho}}{\p\vp}\Bigr)\Be_{\te}.
\]
If, in addition, $\Bv$ is swirl-free, then $\curl\,\Bv$ has only the $\Be_{\theta}$ component.

\section{Analysis of solutions for a swirl-free external force}\label{sec:No-Swirl}

Assume that $\Bu$, $p$, and $\BF$ are axisymmetric and homogeneous of degrees $-1$, $-2$, and $-3$, respectively, and take the form
\begin{align*}
\Bu = \frac{f(\varphi)}{\rho} \Be_\rho + \frac{g(\varphi)}{\rho} \Be_\varphi + \frac{h(\varphi)}{\rho} \Be_\theta, \quad
p = \frac{P(\varphi)}{\rho^2},
\quad 
\BF = \frac{1}{\rho^3}\bigl( X(\varphi)\Be_\rho + Y(\varphi)\Be_\varphi + Z(\varphi)\Be_\theta \bigr),
\end{align*}
where $\Be_\rho, \Be_\varphi, \Be_\theta$ are the spherical basis vectors defined in \eqref{eqn:sphericalbasis}.  
Substituting these expressions into the Navier--Stokes equations \eqref{eqn:NSEhfD} yields
\begin{equation}\label{eqn:halfD1}
\left\{
\begin{aligned}
   f^{\prime\prime} + f^{\prime} \cot \varphi &= gf^{\prime} -(f^2 + g^2 + h^2) - 2P - X, \\
   f^{\prime} &= gg^{\prime} - h^2 \cot \varphi + P^{\prime} - Y, \\ 
   (h^\prime + h\cot \varphi)^{\prime} &= g(h^\prime + h\cot \varphi) - Z,\\
   f+ g^\prime + g\cot \varphi &=0,
\end{aligned}
\right.
\end{equation}
together with the compatibility condition (see \cite[Corollary~1, Lemma~2]{LW09})  and Dirichlet boundary condition
\begin{align}\label{eqn:halfDbc}
f^\prime(0) = g(0) = h(0) = 0, \quad f\bigl(\tfrac{\pi}{2}\bigr) = g\bigl(\tfrac{\pi}{2}\bigr) = h\bigl(\tfrac{\pi}{2}\bigr) = 0.
\end{align}
{\bf In this section we focus on the swirl-free case, that is, $Z\equiv0$.}

\subsection{Reduction for swirl-free external forces}\label{sec:reductNo-swirl}
We now show how system \eqref{eqn:halfD1} reduces to a Riccati equation in the swirl-free setting.
{\bf Claim: The solution $\Bu$ is also swirl-free, i.\,e., $h\equiv0$.} 
The proof follows the same idea as the derivation of the Landau solution (see, for example, \cite[Theorem~8.1]{Tsai18}).
Set $H = h^\prime + h\cot \varphi $. Equation \eqref{eqn:halfD1}$_3$ gives $H^\prime = gH$,
hence $H$ does not change sign on $[0,\frac{\pi}{2}]$. On the other hand, 
\begin{align*}
\int_{0}^{\frac{\pi}{2}} H \sin \varphi\, \mathrm{d}\varphi = \int_0^{\frac{\pi}{2}} (h\sin \varphi)^\prime \, \mathrm{d}\varphi = h(\tfrac{\pi}{2})= 0. 
\end{align*}
Thus $H\equiv0$, i.\,e.\ $h^\prime + h \cot \varphi =0$. With $h(0)=0$ from \eqref{eqn:halfDbc}, we obtain $h\equiv0$.
Now equations \eqref{eqn:halfD1} reduce to 
\begin{equation}\label{eqn:halfD2}
\left\{ 
\begin{aligned}
    f^{\prime \prime} + f^\prime \cot \varphi &= f^\prime g - f^2 - ( g^2+2P) -X,\\
    f^\prime &= g g^\prime + P^\prime - Y, \\
    f+ g^\prime + g\cot \varphi &=0. 
\end{aligned}
\right.
\end{equation}
From \eqref{eqn:halfD2}$_2$ we obtain
\begin{align}\label{eqn:eqnforP}
\frac12 g^2 + P = f + \int_0^{\varphi} Y(\tau)\,\mathrm{d}\tau + \mcC
\end{align}
for some constant $\mcC$.  Substituting this into \eqref{eqn:halfD2}$_1$ and using
$f''+f'\cot\varphi = (f'\sin\varphi)'/\sin\varphi$ gives
\begin{equation}\label{eqn:reduct1}
\frac{(f'\sin\varphi)'}{\sin\varphi} = f'g - f^2 - 2f - \bigl(G+2\mcC\bigr),
\end{equation}
where $G(\varphi) = X(\varphi) + 2\int_0^{\varphi} Y(\tau)\,\mathrm{d}\tau$.
With $-f\sin\varphi = (g\sin\varphi)'$ from \eqref{eqn:halfD2}$_3$, equation \eqref{eqn:reduct1} becomes
\[
-(f'\sin\varphi)' + (fg\sin\varphi)' + (2g\sin\varphi)' = \bigl(G+2\mcC\bigr)\sin\varphi .
\]
Integrating from $0$ to $\varphi$ and applying $f'(0)=g(0)=0$ from \eqref{eqn:halfDbc} yields
\begin{equation}\label{eqn:reduct2}
-f'\sin\varphi + fg\sin\varphi + 2g\sin\varphi = \int_0^{\varphi} \bigl(G(\tau)+2\mcC\bigr)\sin\tau\,\mathrm{d}\tau .
\end{equation}
Set $t = \cos\varphi$ and $L(t) = g(\varphi)\sin\varphi$. From \eqref{eqn:halfD2}$_3$ we compute
\begin{align*}
\frac{\mathrm{d} L}{\mathrm{d} t} = f,\qquad
\frac{\mathrm{d} f}{\mathrm{d} \varphi} = \frac{\mathrm{d}^2 L}{\mathrm{d} t^2}(-\sin\varphi).
\end{align*}
Throughout the paper, derivatives with respect to $\varphi$ are indicated either by a prime $'$ or by $\frac{\mathrm{d}}{\mathrm{d}\varphi}$, whereas derivatives with respect to the variable $t = \cos\varphi$ are always written as $\frac{\mathrm{d}}{\mathrm{d}t}$.
The boundary conditions \eqref{eqn:halfDbc} translate to
\[
L(0)=g\bigl(\frac{\pi}{2}\bigr)\sin\frac{\pi}{2}=0,\quad
L(1)=g(0)\sin0=0,\quad
\frac{\mathrm{d} L}{\mathrm{d} t}(0)=f\bigl(\frac{\pi}{2}\bigr)=0.
\]
Define $J(t):=\int_0^{\varphi} (G(\tau) + 2\mcC) \sin\tau \, \mathrm{d}\tau$.  Then \eqref{eqn:reduct2} becomes
\begin{align}\label{eqn:reduct3}
    (1-t^2)\frac{\mathrm{d}^2 L}{\mathrm{d} t^2} + L\frac{\mathrm{d} L}{\mathrm{d} t} + 2L = J(t),
\end{align}
The left-hand side is an exact derivative:
\[
(1-t^2)\frac{\mathrm{d}^2 L}{\mathrm{d} t^2} + L\frac{\mathrm{d} L}{\mathrm{d} t} + 2L
= \frac{\mathrm{d}}{\mathrm{d} t}\Bigl( (1-t^2)\frac{\mathrm{d} L}{\mathrm{d} t}+\frac{1}{2}L^2 +2 t L \Bigr).
\]
Integrating \eqref{eqn:reduct3} from $t$ to $1$ and using $L(1)=0$, we obtain
\begin{equation}\label{eqn:reduct4}
\left\{
\begin{aligned}
&(1-t^2)\frac{\mathrm{d} L}{\mathrm{d} t}+\frac{1}{2}L^2 +2 t L= -\int_{t}^1 J(s) \, \mathrm{d} s,\\
&L(0)=L(1)=\frac{\rmd L}{\rmd t}(0)=0.
\end{aligned}
\right.
\end{equation}
Evaluating the first equation at $t=0$ and using $L(0)=\frac{\mathrm{d} L}{\mathrm{d} t}(0)=0$ gives the compatibility condition
$\int_0^1 J(s)\,\mathrm{d}s = 0$, which explicitly is
\begin{equation}\label{eqn:constC}
\begin{aligned}
\int_0^1 J(s) \, \mathrm{d} s 
&= \int_0^{\frac{\pi}{2}} \biggl(\int_0^{\varphi} (G(\tau) +2 \mcC) \sin\tau \, \mathrm{d}\tau \biggr) \sin\varphi \, \mathrm{d}\varphi\\
&= \int_0^{\frac{\pi}{2}} G(\tau) \sin\tau \biggl(\int_{\tau}^{\frac{\pi}{2}}
\sin\varphi \, \mathrm{d}\varphi \biggr) \, \mathrm{d}\tau + \mcC \int_0^{\frac{\pi}{2}} 2\sin\tau(1-\cos\tau)\,\mathrm{d}\tau\\
&= \int_0^{\frac{\pi}{2}} G(\tau) \sin\tau \cos\tau \, \mathrm{d}\tau + \mcC = 0.
\end{aligned}
\end{equation}
Hence the constant $\mcC$ is determined by
\begin{align*}
\mcC 
&= -\int_0^{\frac{\pi}{2}} G(\tau) \sin\tau \cos\tau \, \mathrm{d}\tau
=-\int_0^{\frac{\pi}{2}} \Big[X(\varphi) \sin \varphi \cos \varphi + Y(\varphi) \cos^2 \varphi \Big] \, \mathrm{d}\varphi.
\end{align*}
Introduce $M(t) = \dfrac{L(t)}{1-t^2}$. Substituting $L = (1-t^2)M$ into the differential equation of \eqref{eqn:reduct4} we get
\[
\frac{\mathrm{d} M}{\mathrm{d} t} + \frac12 M^2 = N \quad\text{in } (0,1),
\]
where
\[
N(t):= \frac{-1}{(1-t^2)^2} \int_t^1 J(s) \, \mathrm{d} s.
\]
The boundary conditions in \eqref{eqn:reduct4} become $M(0)=\frac{\mathrm{d} M}{\mathrm{d} t}(0)=0$. However, because $\int_0^1 J=0$ implies $N(0)=0$, the ODE forces $\frac{\mathrm{d} M}{\mathrm{d} t}(0)=0$ automatically once $M(0)=0$ is imposed. Thus the problem \eqref{eqn:reduct4} reduces to the initial value problem
\begin{equation}\label{eqn:Riccati}
\left\{
\begin{aligned}
\frac{\mathrm{d} M}{\mathrm{d} t} + \frac12 M^2 &= N \quad \text{in } (0,1), \\
M(0)&=0.
\end{aligned}
\right.
\end{equation}
Note that $M(t)=\dfrac{g(\varphi)}{\sin\varphi}$ with $t=\cos\varphi$.
We verify that $N(t)$ extends continuously to $[0,1]$.  From
\[
\frac{\mathrm{d} J}{\mathrm{d} t}
= \frac{\mathrm{d} J}{\mathrm{d}\varphi}\frac{\mathrm{d}\varphi}{\mathrm{d} t}
= -\bigl(G(\varphi)+2\mcC\bigr),
\]
we obtain $\frac{\mathrm{d} J}{\mathrm{d} t}(1)=-(G(0)+2\mcC)$.  Since $J(1)=0$, expanding $-\int_t^1 J(s)\,\mathrm{d}s$ around $t=1$ gives
\[
-\int_t^1 J(s)\,\mathrm{d}s
= \frac12 \, \frac{\mathrm{d} J}{\mathrm{d} t}(1)\,(t-1)^2 + o\bigl((t-1)^2\bigr)
= -\frac{G(0)+2\mcC}{2}(t-1)^2 + o\bigl((t-1)^2\bigr).
\]
Dividing by $(1-t^2)^2=(1-t)^2(1+t)^2$ yields
\[
\lim_{t\to 1^-} N(t) = -\frac{1}{8}\bigl(G(0)+2\mcC\bigr)
= -\frac{1}{8}\bigl(X(0)+2\mcC\bigr).
\]
Thus $N\in C^0[0,1]$, and the Riccati equation \eqref{eqn:Riccati} is well-posed on the closed interval.
The above reduction is carried out under the assumption that a sufficiently regular self-similar solution exists. This will be justified in Section~\ref{sec:EUNo-swirl}, where we prove local regularity on $\overline{\mathbb R^3_+}\setminus\{0\}$; consequently all derivatives in this subsection are classical away from the origin.

\subsection{Existence and uniqueness for small swirl-free forces}\label{sec:EUNo-swirl}

We now prove Theorem~\ref{thm:A} for swirl-free forces.
To this end we first establish a general solvability result for the Riccati equation on an arbitrary interval $[t_0,1]$; this lemma will also be a key tool when we extend the analysis to solid cones in Section~\ref{sec:Discussions}.
Under the smallness condition on the external force, we show that the Riccati equation is solvable and that its solution yields a full Navier--Stokes solution with $\Bu\in C^{2,\alpha}$ and $p\in C^{1,\alpha}$ for every $\alpha\in (0,1)$.

\begin{lemma}[Solvability of the Riccati equation on a general interval]\label{lm:Riccati}
Let $t_0<1$, $\ell:=1-t_0$, and consider the initial value problem
\[
\frac{\mathrm{d}M}{\mathrm{d}t} + \frac12 M^{2} = N \quad \text{on } [t_0,1],\qquad M(t_0)=0,
\]
with $N\in C^{0}[t_0,1]$.
\begin{itemize}
\item[(i)] If $\|N\|_{C^{0}[t_0,1]} < \dfrac{1}{2\ell^{2}}$, then there exists a unique solution $M\in C^{1}[t_0,1]$ satisfying
\[
\|M\|_{C^{0}[t_0,1]} \le 2\ell\|N\|_{C^{0}[t_0,1]},\qquad
\|M\|_{C^{1}[t_0,1]} \le 2(\ell+1)\|N\|_{C^{0}[t_0,1]}.
\]
\item[(ii)] For any $N_{1},N_{2}$ with $\|N_{i}\|_{C^{0}[t_0,1]} \le \delta < \dfrac{1}{2\ell^{2}}$, let $M_{1},M_{2}$ be the corresponding unique solutions. Then
\[
\|M_{1}-M_{2}\|_{C^{1}[t_0,1]} \le \frac{\ell+1}{1-2\ell^{2}\delta}\,\|N_{1}-N_{2}\|_{C^{0}[t_0,1]}.
\]
\end{itemize}
\end{lemma}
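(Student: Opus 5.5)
Rewrite the initial value problem as the fixed-point equation
\[
M(t)=\int_{t_0}^{t}\Bigl(N(s)-\tfrac12 M(s)^2\Bigr)\,\mathrm{d}s=:\mathcal T(M)(t),
\]
and apply the contraction mapping principle on a closed ball of $C^0[t_0,1]$. Write $n:=\|N\|_{C^0[t_0,1]}$ and $R:=2\ell n$, and let $B_R=\{M\in C^0[t_0,1]:\|M\|_{C^0}\le R\}$.

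\textbf{$\mathcal T$ maps $B_R$ into itself.} For $M\in B_R$,
\[
|\mathcal T(M)(t)|\le \ell n+\tfrac12\ell R^2=\ell n\,(1+2\ell^2 n).
\]
Since $2\ell^2 n<1$, this is below $2\ell n=R$.

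\textbf{$\mathcal T$ is a contraction.} For $M_1,M_2\in B_R$,
\[
|\mathcal T(M_1)-\mathcal T(M_2)|\le \tfrac12\ell\,\|M_1+M_2\|_{C^0}\|M_1-M_2\|_{C^0}\le \ell R\,\|M_1-M_2\|_{C^0}=2\ell^2 n\,\|M_1-M_2\|_{C^0}.
\]
The constant $2\ell^2 n$ is less than $1$.

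\textbf{Existence, bounds and regularity.} Banach's theorem gives a fixed point $M\in B_R$ with $\|M\|_{C^0}\le 2\ell n$. Because the integrand is continuous, $M\in C^1$ and it solves the ODE. For the derivative,
\[
\|M'\|_{C^0}\le n+\tfrac12 R^2=n+2\ell^2n^2\le 2n.
\]
Hence $\|M\|_{C^1}=\|M\|_{C^0}+\|M'\|_{C^0}\le 2(\ell+1)n$, as claimed in (i).

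\textbf{Uniqueness among all $C^1$ solutions.} This is the one point needing care, since the contraction only gives uniqueness inside $B_R$. The Riccati nonlinearity is locally Lipschitz, so any two $C^1$ solutions agree on their common interval of existence by Picard--Lindel\"of. Equivalently, one can run a Gronwall estimate on $w=M_1-M_2$, which satisfies $w'=-\tfrac12(M_1+M_2)w$ with $w(t_0)=0$ and bounded coefficient, so $w\equiv0$. Either argument gives uniqueness in $C^1[t_0,1]$.

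\textbf{Part (ii).} Let $w=M_1-M_2$. Then
\[
w'=(N_1-N_2)-\tfrac12(M_1+M_2)\,w,\qquad w(t_0)=0,
\]
and by (i), $\|M_i\|_{C^0}\le 2\ell\delta$. Integrating from $t_0$ gives
\[
\|w\|_{C^0}\le \ell\,\|N_1-N_2\|_{C^0}+2\ell^2\delta\,\|w\|_{C^0},
\]
so
\[
\|w\|_{C^0}\le \frac{\ell}{1-2\ell^2\delta}\,\|N_1-N_2\|_{C^0}.
\]
For the derivative, the equation for $w'$ gives
\[
\|w'\|_{C^0}\le \|N_1-N_2\|_{C^0}+2\ell\delta\,\|w\|_{C^0}\le \|N_1-N_2\|_{C^0}\Bigl(1+\frac{2\ell^2\delta}{1-2\ell^2\delta}\Bigr)=\frac{\|N_1-N_2\|_{C^0}}{1-2\ell^2\delta}.
\]
Adding the two estimates yields
\[
\|M_1-M_2\|_{C^1}\le \frac{\ell+1}{1-2\ell^2\delta}\,\|N_1-N_2\|_{C^0}.
\]

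The only step I expect to need attention is checking that the constants close exactly as stated; the computations above indicate that they do. In particular, in (ii) the self-referential term $2\ell^2\delta\|w\|$ must be absorbed on the left before $w'$ is estimated, which is what produces the factor $(1-2\ell^2\delta)^{-1}$.
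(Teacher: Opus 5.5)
Your proof is correct and follows essentially the same route as the paper: a contraction argument for the integral operator $M\mapsto\int_{t_0}^{t}(N-\frac12 M^2)\,\mathrm{d}s$ on a ball in $C^0[t_0,1]$, together with the same absorption of the $2\ell^2\delta\|M_1-M_2\|_{C^0}$ term in part (ii). The differences are minor. You take the radius $R=2\ell\|N\|_{C^0}$ directly, whereas the paper uses the smaller root $R=\ell^{-1}\bigl(1-\sqrt{1-2\ell^2\|N\|_{C^0}}\bigr)$ of $R=\ell\|N\|_{C^0}+\frac{\ell}{2}R^2$ and then bounds it by $2\ell\|N\|_{C^0}$. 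You also give an explicit Gronwall argument for uniqueness among all $C^1$ solutions, which the paper leaves implicit, since its contraction only gives uniqueness inside the ball.
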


\begin{proof}
(i) For brevity we omit the interval $[t_0,1]$ in the norm notation.
Define the operator $\Psi$ on $C^{0}[t_0,1]$ by
\[
\Psi M(t) := \int_{t_0}^{t} \Bigl( N(s) - \frac12 M^{2}(s) \Bigr) \, \mathrm{d}s ,\qquad t\in[t_0,1].
\]
For any $M\in C^{0}[t_0,1]$,
\[
\|\Psi M\|_{C^{0}} \le \ell\Bigl( \|N\|_{C^{0}} + \frac12 \|M\|_{C^{0}}^{2} \Bigr).
\]
Let $\mathfrak{X}_{R} = \{\, M\in C^{0}[t_0,1] : M(t_0)=0,\ \|M\|_{C^{0}} \le R \,\}$ with $R>0$ to be chosen.
If $M\in\mathfrak{X}_{R}$, then
\[
\|\Psi M\|_{C^{0}} \le \ell\Bigl( \|N\|_{C^{0}} + \frac12 R^{2} \Bigr).
\]
Choosing $R$ to satisfy $\ell\|N\|_{C^{0}} + \frac{\ell}{2}R^{2} = R$ gives
\[
R = \frac{1}{\ell}\bigl(1 - \sqrt{\,1 - 2\ell^{2}\|N\|_{C^{0}}\,}\bigr),
\]
which is well defined and positive because $\|N\|_{C^{0}} < \frac{1}{2\ell^{2}}$.
With this $R$, $\Psi$ maps $\mathfrak{X}_{R}$ into itself.

Now estimate the difference of two images:
\[
(\Psi M_{1} - \Psi M_{2})(t) = \frac12 \int_{t_0}^{t} \bigl( M_{2}^{2} - M_{1}^{2} \bigr) \, \mathrm{d}s,
\]
hence
\[
\|\Psi M_{1} - \Psi M_{2}\|_{C^{0}}
   \le \frac{\ell}{2} \bigl( \|M_{1}\|_{C^{0}} + \|M_{2}\|_{C^{0}} \bigr) \|M_{1}-M_{2}\|_{C^{0}}
   \le \ell R \,\|M_{1}-M_{2}\|_{C^{0}}.
\]
Because $\ell R = 1 - \sqrt{\,1-2\ell^{2}\|N\|_{C^{0}}\,} < 1$, the map $\Psi$ is a contraction on $\mathfrak{X}_{R}$. Consequently, there exists a unique $M\in\mathfrak{X}_{R}$ with $M = \Psi M$, which solves the Riccati equation.

From $M = \Psi M$ we get $\|M\|_{C^{0}}\le R$, and the differential equation yields
\[
\Bigl\|\frac{\mathrm{d}M}{\mathrm{d}t}\Bigr\|_{C^{0}} \le \|N\|_{C^{0}} + \frac12 R^{2} = \frac{R}{\ell}
\]
by the choice of $R$.  Thus $\|M\|_{C^{1}} \le R + \frac{R}{\ell} = \bigl(1+\frac1\ell\bigr)R$.
Using $1-\sqrt{1-x}\le x$ for $x\in[0,1)$, we obtain $R\le 2\ell\|N\|_{C^{0}}$, and therefore
\[
\|M\|_{C^{0}} \le 2\ell\|N\|_{C^{0}}, \qquad
\|M\|_{C^{1}} \le 2(\ell+1)\|N\|_{C^{0}} .
\]
This proves (i).

\noindent(ii) From the integral formulation and the bound $\|M_i\|_{C^0}\le 2\ell\delta$,
\[
\begin{aligned}
\|M_1-M_2\|_{C^0}
&\le \ell\|N_1-N_2\|_{C^0} + \frac{\ell}{2}(4\ell\delta)\|M_1-M_2\|_{C^0} \\
&= \ell\|N_1-N_2\|_{C^0} + 2\ell^2\delta\,\|M_1-M_2\|_{C^0}.
\end{aligned}
\]
Hence
\[
\|M_1-M_2\|_{C^0} \le \frac{\ell}{1-2\ell^2\delta}\,\|N_1-N_2\|_{C^0}.
\]
For the derivatives, $\frac{\mathrm{d}M_i}{\mathrm{d}t} = N_i - \frac12 M_i^2$, so using the same bound on $\|M_i\|_{C^0}$,
\[
\begin{aligned}
\Bigl\|\frac{\mathrm{d}M_1}{\mathrm{d}t}-\frac{\mathrm{d}M_2}{\mathrm{d}t}\Bigr\|_{C^0}
&\le \|N_1-N_2\|_{C^0} + \frac12(4\ell\delta)\|M_1-M_2\|_{C^0} \\
&\le \|N_1-N_2\|_{C^0} + 2\ell\delta\cdot\frac{\ell}{1-2\ell^2\delta}\|N_1-N_2\|_{C^0} \\
&= \frac{1}{1-2\ell^2\delta}\,\|N_1-N_2\|_{C^0}.
\end{aligned}
\]
Adding the two estimates gives the required $C^1$ bound.
\end{proof}

\begin{proof}[\textbf{Proof of Theorem~\ref{thm:A} (no-swirl case)}]
The no-swirl case is proved by applying Lemma~\ref{lm:Riccati} with $t_0=0$ and $\ell=1$ to the Riccati equation \eqref{eqn:Riccati}.

\noindent
\underline{Step 1.} \emph{Estimate of $G+2\mcC$.}
For a swirl-free force $\BF = \rho^{-3}(X(\varphi)\Be_\rho + Y(\varphi)\Be_\varphi)$,
\eqref{eqn:sphericalcurl} gives $\nabla\times\BF = -\rho^{-4}(X'+2Y)\Be_\theta$,
and therefore
\[
\|(\curl\BF)^{\tan}\|_{C^0(\mathbb{S}^2\cap\mathbb{R}^3_+)} = \|X'+2Y\|_{C^0[0,\frac{\pi}{2}]}.
\]
Recall that $G(\vp)=X(\vp)+2\int_0^\vp Y(\tau)\, \rmd\tau$ and $\mcC=-\int_0^{\frac{\pi}{2}}G(\tau)\sin\tau \cos\tau \, \rmd\tau$ from \eqref{eqn:constC}.
A direct calculation shows
\begin{align}\label{eqn:Gand2C}
G(\vp)+2\mcC 
= 2\int_0^{\frac{\pi}{2}} \bigl(G(\vp)-G(\tau)\bigr)\sin\tau \cos\tau\, \rmd\tau,
\end{align}
and consequently,
\begin{align*}
\|G+2\mcC\|_{C^0[0,\frac{\pi}{2}]} 
\le \sup_{0 \le \vp,\tau \le \frac{\pi}{2}} |G(\vp)-G(\tau)|
\le \frac{\pi}{2} \|X'+2Y\|_{ C^0[0, \frac{\pi}{2}] }.
\end{align*}
Throughout this proof we keep the coefficients $\pi/2$ explicit because they encode the interval length; this will be convenient in Section~\ref{sec:Discussions} when we replace $[0,\pi/2]$ by $[0,\varphi_0]$ and need to track the angular dependence.

\noindent
\underline{Step 2.} \emph{Estimate of $N$.}
Recall that
\[
N(t)=\frac{-1}{(1-t^2)^2} \int_t^1 J(s) \, \rmd s,
\]
where $J(t)=\int_0^{\vp} (G(\tau) +2\mcC) \sin \tau \, \rmd\tau$ with $t=\cos\vp$.
Using $J(1)=0$, the numerator of $N$ becomes
\begin{equation*}
\begin{aligned}
- \int_t^1 J(s)\, \mathrm{d}s
&= t J(t) + \int_t^1 s \,\frac{\mathrm{d}J}{\mathrm{d}t}(s) \, \mathrm{d}s
= \int_t^1 (s-t)\, \frac{\mathrm{d}J}{\mathrm{d}t}(s)\, \mathrm{d}s \\
&= (1-t)^2 \int_0^1 \frac{\mathrm{d}J}{\mathrm{d}t}\bigl( 1-\theta(1-t) \bigr)(1-\theta) \, \mathrm{d}\theta .
\end{aligned}
\end{equation*}
Therefore
\begin{align}\label{eqn:NandJprime}
N(t) = \frac{1}{(1+t)^2}\int_0^1 \frac{\mathrm{d}J}{\mathrm{d}t}\bigl( 1-\theta(1-t)\bigr)(1-\theta) \, \rmd\theta.
\end{align}
Because $\frac{\mathrm{d}J}{\mathrm{d}t}(t) = -(G(\vp)+2\mcC)$ with $t=\cos\vp$, we have $\bigl\|\frac{\mathrm{d}J}{\mathrm{d}t}\bigr\|_{C^0[0,1]} = \|G+2\mcC\|_{C^0[0,\frac{\pi}2]}$, and
\[
\|N\|_{C^0[0,1]} \le \frac12 \|G+2\mcC\|_{C^0[0,\frac{\pi}{2}]} \le \frac{\pi}{4} \|X'+2Y\|_{ C^0[0, \frac{\pi}{2}] }.
\]

\noindent
\underline{Step 3.} \emph{Existence and uniqueness.}
Set $\varepsilon_1 = \frac{2}{\pi}$. If $\|X'+2Y\|_{C^0[0, \frac{\pi}{2}]} < \varepsilon_1$, then $\|N\|_{C^0[0, 1]} < 1/2$; by Lemma~\ref{lm:Riccati}(i) there exists a unique $M\in C^1[0,1]$. 
Defining $g(\vp)=M(\cos\vp)\sin\vp$, we obtain $g\in C^1[0,\pi/2]$.
To see that $g(\vp)\cot\vp$ is continuous we use $g(0)=0$:
\[
g(\vp)\cot\vp = \cot\vp \int_0^{\vp} g'(\tau)\,\rmd\tau
= \vp\cot\vp \int_0^1 g'(\vp s)\,\rmd s.
\]
Because $|\vp\cot\vp| \le \frac{\pi}{2}$ on $[0,\frac{\pi}{2}]$, the product $g(\varphi)\cot\varphi$ is continuous. Setting $f = -(g' + g\cot\varphi)$ (see \eqref{eqn:halfD2}$_3$) yields $f\in C^0[0,\pi/2]$.  The pressure $P$ is recovered from \eqref{eqn:eqnforP}.  With $h\equiv0$, the quartet $(f,g,h,P)$ satisfies the reduced system \eqref{eqn:halfD1} in the distributional sense. 
Consequently,
\[
\Bu = \frac{f(\varphi)}{\rho}\Be_\rho + \frac{g(\varphi)}{\rho}\Be_\varphi \in C^0(\R^3_+),
\qquad
p = \frac{P(\varphi)}{\rho^2}
\]
fulfil the Navier--Stokes equations \eqref{eqn:NSEhfD} in the sense of distributions.
Uniqueness of $\Bu$ is inherited from the uniqueness of the Riccati equation \eqref{eqn:Riccati}.
Since the angular profiles are $C^1$ and the equations are satisfied away from the origin, standard local Stokes regularity yields \[\n\Bu\in C^{2,\alpha}_{\mathrm{loc}}(\overline{\mathbb R^3_+}\setminus\{0\}),\qquad p\in C^{1,\alpha}_{\mathrm{loc}}(\overline{\mathbb R^3_+}\setminus\{0\})\n\] for every $\alpha\in(0,1)$. Hence all derivatives appearing in the reduction to the ODE system are classical away from the origin.
\end{proof}

On the angular interval $[0,\varphi_0]$ corresponding to a general cone,
the bound $|\varphi\cot\varphi|\le \pi/2$ is replaced by
$|\varphi\cot\varphi|\le M(\varphi_0)$, where $M(\varphi_0)$ is finite for $\varphi_0\in(0,\pi)$,
remains bounded as $\varphi_0\to0$, and blows up only as $\varphi_0\to\pi$, similarly to the constant
$C(\varphi_0)$ in Lemma~\ref{lm:ODEEst}.

\subsection{Blow-up criterion for the swirl-free external force}\label{sec:blowup}

This subsection is devoted to the proof of Theorem~\ref{thm:B}. Recall that in Section~\ref{sec:reductNo-swirl} we derived the Riccati equation
\begin{equation*}
\left\{
\begin{aligned}
\frac{\mathrm{d}M}{\mathrm{d}t} + \frac12 M^{2} &= N, \quad t \in [0,1],\\
M(0)&=0,
\end{aligned}
\right.
\end{equation*}
where
\[
N(t)=-\frac{1}{(1-t^{2})^{2}}\int_{t}^{1} J(s)\,\mathrm{d}s,\qquad
J(t)=\int_{0}^{\varphi} \bigl(G(\tau)+2\mcC\bigr)\sin\tau\,\mathrm{d}\tau,
\]
with $\mcC=-\int_{0}^{\frac{\pi}{2}} G(\tau) \sin\tau \cos\tau \, \mathrm{d}\tau$ and $t=\cos\varphi$.
Replacing $\BF$ by $\lambda\BF$ changes $G$ to $\lambda G$ and $N$ to $\lambda N$, yielding
\begin{equation}\label{eqn:sclRiccati}
\left\{
\begin{aligned}
\frac{\mathrm{d}M}{\mathrm{d}t} + \frac12 M^{2} &= \lambda N, \quad t \in [0,1],\\
M(0)&=0.
\end{aligned}
\right.
\end{equation}

To discuss solutions beyond $t=1$, we extend $N$ constantly to $(-1,2)$ by $N(t)=N(0)$ for $t\in(-1,0]$ and $N(t)=N(1)$ for $t\in[1,2)$.

\begin{theorem}\label{thm:Auxiliary}
Let $N\in C^{0}[0,1]$ and consider the initial value problem \eqref{eqn:sclRiccati} on $(-1,2)$.
For each $\lambda$, let $M_{\lambda}$ be the unique solution of \eqref{eqn:sclRiccati} on $(-1,2)$ and let $[0,\omega_{\lambda})$ be the maximal right interval of existence of the solution. Define
\[
\Lambda:=\{\lambda\in\mathbb{R}\mid\omega_{\lambda}>1\}.
\]
Then $\Lambda$ is an open interval containing $0$; write $\Lambda=(\underline{\lambda},\overline{\lambda})$.
Moreover, the following classification holds.
\begin{enumerate}[(i)]
\item If $N\le0$ on $(0,1)$ and $N\not\equiv0$, then $\underline{\lambda}=-\infty$ and $\overline{\lambda}\in(0,+\infty)$;
\item If $N\ge0$ on $(0,1)$ and $N\not\equiv0$, then $\underline{\lambda}\in(-\infty,0)$ and $\overline{\lambda}=+\infty$;
\item If $N$ changes sign on $(0,1)$, then $\underline{\lambda}\in(-\infty,0)$ and $\overline{\lambda}\in(0,+\infty)$;
\item If $N\equiv0$ on $(0,1)$, then $\underline{\lambda}=-\infty$ and $\overline{\lambda}=+\infty$.
\end{enumerate}
If $\underline{\lambda}$ (resp.\ $\overline{\lambda}$) is finite, then $\omega_{\underline{\lambda}}=1$ (resp.\ $\omega_{\overline{\lambda}}=1$); in other words, the corresponding solution exists on $[0,1)$ and blows up exactly at $t=1$.
\end{theorem}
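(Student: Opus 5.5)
We reduce to the linear second-order equation via the Riccati substitution $M=2u'/u$. With $u(0)=1$ and $u'(0)=0$, the Riccati problem \eqref{eqn:sclRiccati} becomes
\[
u''=\tfrac{\lambda}{2}Nu ,
\]
whose solution $u_\lambda$ exists globally on $(-1,2)$. Moreover $M_\lambda=2u_\lambda'/u_\lambda$ exists exactly as long as $u_\lambda>0$. Hence $\omega_\lambda$ is the first zero of $u_\lambda$ to the right of $0$ (with $\omega_\lambda=2$ if there is none), and $\lambda\in\Lambda$ if and only if $u_\lambda>0$ on $[0,1]$. The whole statement becomes a Sturm-type question about the linear equation.

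\textbf{Step 1: $\Lambda$ is an open interval containing $0$.}
\begin{itemize}
\item $0\in\Lambda$ because $u_0\equiv1$.
\item Openness follows from continuous dependence of $u_\lambda$ on $\lambda$ in $C^0[0,1]$: positivity on the compact set $[0,1]$ is an open condition.
\item For convexity (an interval) I would use comparison for the Riccati equation. Fix $\lambda_1<\lambda_2$ of the same sign, say both positive, with $\lambda_2\in\Lambda$. Then $\lambda_1 N$ and $\lambda_2 N$ are not pointwise ordered when $N$ changes sign, so naive comparison fails.
\item Instead I would use that $\lambda\mapsto\inf_{[0,1]}u_\lambda$ relates to a quadratic form. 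Namely, $u_\lambda>0$ on $[0,1]$ if and only if
\[
Q_\lambda(\phi)=\int_0^1\bigl(\phi'^2+\tfrac{\lambda}{2}N\phi^2\bigr)\,\rmd t>0
\]
for all nonzero $\phi$ with $\phi(1)=0$ (Neumann condition at $0$). This is the Jacobi/Sturm criterion for a free left endpoint.
\item $Q_\lambda$ is affine in $\lambda$, so the set of $\lambda$ where $Q_\lambda$ is positive definite is convex; this gives the interval property.
\item Justifying this equivalence (Picone identity $\int\phi'^2+\frac\lambda2N\phi^2=\int u^2((\phi/u)')^2$ for one direction, and a test function built from $u$ up to its first zero for the other) is the main technical step.
\end{itemize}

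\textbf{Step 2: classification.}
\begin{itemize}
\item If $\lambda N\ge0$, then $u''\ge0$ with $u'(0)=0$, so $u\ge1$. This covers $\lambda\to-\infty$ in case (i), $\lambda\to+\infty$ in case (ii), and all of case (iv).
\item For finiteness of an endpoint, suppose $\lambda N$ is negative on some subinterval $[a,b]\subset(0,1)$, say $\lambda N\le-c|\lambda|$ there. Take $\phi$ a fixed bump supported in $[a,b]$ (so $\phi(1)=0$). Then $Q_\lambda(\phi)\to-\infty$ as $|\lambda|\to\infty$ in that direction.
\item Hence $\Lambda$ is bounded on the side where $\lambda N$ takes negative values somewhere. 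This gives $\overline\lambda<\infty$ when $N<0$ somewhere, and $\underline\lambda>-\infty$ when $N>0$ somewhere. Together these cover (i)–(iii).
\item The endpoints are nonzero because $0$ lies in the open set $\Lambda$.
\end{itemize}

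\textbf{Step 3: blow-up exactly at $t=1$.} Suppose $\overline\lambda$ is finite.
\begin{itemize}
\item By continuity, $u_{\overline\lambda}\ge0$ on $[0,1]$, since it is a limit of positive functions.
\item $u_{\overline\lambda}$ must vanish somewhere in $[0,1]$; otherwise openness would put $\overline\lambda$ in $\Lambda$.
\item A zero at an interior point $t_*<1$ is a minimum, so $u'(t_*)=0$. Uniqueness for the linear ODE then gives $u\equiv0$, contradicting $u(0)=1$.
\item Hence the first zero is $t=1$, i.e. $\omega_{\overline\lambda}=1$, and $M_{\overline\lambda}=2u'/u\to-\infty$ as $t\to1^-$.
\end{itemize}
The same argument applies to $\underline\lambda$.

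I expect the main obstacle to be the interval property in Step 1 when $N$ changes sign. There I will rely on the variational characterization, which must handle the Neumann-type left endpoint (since $u'(0)=0$) carefully.
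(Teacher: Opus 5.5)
Your proof is correct, and it takes a genuinely different route from the paper, which never linearizes and works only with comparison for the Riccati equation itself.

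In the paper:
\begin{itemize}
\item openness comes from lower semicontinuity of $\omega_\lambda$;
\item finiteness of an endpoint (Lemma~\ref{lm:lm1}) comes from the inequality $M'\le-(\frac12M^2+\lambda C)$ on a subinterval where $N<-C$, integrated via $\arctan$;
\item the interval property comes from the bound $M_\lambda\le\sqrt{2|\lambda|C_1}$ (so every singularity is a blow-down) plus a scaling trick. For $\lambda>\overline{\lambda}>0$, the function $v=\lambda M_{\overline{\lambda}}/\overline{\lambda}$ satisfies $v'=\lambda N-\frac{\overline{\lambda}}{2\lambda}v^2\ge\lambda N-\frac12v^2$, so $M_\lambda\le v$ and $\omega_\lambda\le\omega_{\overline{\lambda}}\le1$.
\end{itemize}
That trick is exactly what gets around the obstacle you flagged, namely that $\lambda_1N$ and $\lambda_2N$ are unordered when $N$ changes sign. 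It shows $\Lambda$ is star-shaped about $0$, which suffices, without proving convexity. Lemma~\ref{lm:exactblow} is then proved by contradiction: $M_{\lambda_3}$ for $\lambda_3$ near $\overline{\lambda}$ is compared with the explicit supersolution $v'=\overline{\lambda}C_1-\frac12v^2$, $v(t_R)=-R$.

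Your substitution $M=2u'/u$ replaces all of this by linear Sturm theory.
\begin{itemize}
\item Openness is just continuity of $u_\lambda$ in $\lambda$.
\item The Picone identity, whose boundary terms vanish because $u'(0)=0$ and $\phi(1)=0$, together with the test function $u\,\mathbf{1}_{[0,t_*]}$, gives the exact equivalence with positivity of $Q_\lambda$. This yields convexity of $\Lambda$, and with a bump function, finiteness of the endpoints.
\item Exact blow-up at $t=1$ reduces to one fact: a nonnegative solution with an interior zero has a double zero and so vanishes identically. This is considerably shorter than the paper's quantitative argument.
\end{itemize}

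Your route also gives a spectral description. $\overline{\lambda}$ and $\underline{\lambda}$ are the principal positive and negative eigenvalues of the indefinite-weight problem $-\phi''=\lambda(-\tfrac12N)\phi$, $\phi'(0)=0$, $\phi(1)=0$, with $u_{\overline{\lambda}}$ (resp.\ $u_{\underline{\lambda}}$) as the positive eigenfunction. This makes the classification by the sign of $N$ transparent. The paper's route, in exchange, needs only elementary comparison and no variational lemma. Neither argument uses anything specific to $[0,1]$, so both carry over verbatim to $[t_0,1]$ for the cone.
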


We prove the theorem with the help of three lemmas.

\begin{lemma}\label{lm:lm1}
Consider the initial value problem \eqref{eqn:sclRiccati}.
If $N$ is negative somewhere in $(0,1)$, then there exists $\lambda^{*}>0$ such that $\omega_{\lambda}\le 1$ for all $\lambda\ge\lambda^{*}$.
Similarly, if $N$ is positive somewhere in $(0,1)$, then there exists $\lambda_{*}<0$ such that $\omega_{\lambda}\le 1$ for all $\lambda\le\lambda_{*}$.
\end{lemma}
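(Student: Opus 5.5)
Since $N$ is continuous and negative somewhere in $(0,1)$, we fix a subinterval $[a,b]\subset(0,1)$ with $a<b$ and a constant $\eta>0$ such that $N\le-\eta$ on $[a,b]$. For $\lambda>0$ the Riccati equation \eqref{eqn:sclRiccati} then gives
\[
\frac{\mathrm{d}M_\lambda}{\mathrm{d}t}\le \lambda N-\tfrac12M_\lambda^2\le -\lambda\eta-\tfrac12 M_\lambda^2\quad\text{on }[a,b],
\]
wherever $M_\lambda$ exists. The aim is to show that, for $\lambda$ large, $M_\lambda$ cannot survive across $[a,b]$, i.e. $\omega_\lambda\le b<1$. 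We argue by contradiction. Suppose $\omega_\lambda>b$, so $M_\lambda$ is finite on $[a,b]$. We will find an earlier point $t_1\in[a,b)$ where $M_\lambda(t_1)\le 0$; this can be done in either of two ways.

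\emph{Via the value at $a$.} If $M_\lambda(a)\le0$, take $t_1=a$. If $M_\lambda(a)>0$, the inequality $\frac{\mathrm{d}M_\lambda}{\mathrm{d}t}\le-\lambda\eta$ forces $M_\lambda$ to decrease at rate at least $\lambda\eta$. Hence $M_\lambda$ becomes nonpositive before $a+M_\lambda(a)/(\lambda\eta)$. To control this time we need an upper bound on $M_\lambda(a)$ that is uniform in $\lambda$ up to a factor $\sqrt\lambda$. We get it from a comparison on $[0,a]$: $\frac{\mathrm{d}M_\lambda}{\mathrm{d}t}\le\lambda\|N\|_{C^0}$ gives $M_\lambda(a)\le \lambda\|N\|a$, which is too weak. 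A better bound is $M_\lambda\le\sqrt{2\lambda\|N\|}$, since the line $M=\sqrt{2\lambda\|N\|}$ is a supersolution barrier ($\frac{\mathrm{d}M}{\mathrm{d}t}\le0$ there) and $M_\lambda(0)=0$ lies below it. With this bound, $M_\lambda$ hits $0$ before time $a+\sqrt{2\|N\|}/(\eta\sqrt\lambda)$, which is less than $(a+b)/2$ for large $\lambda$.

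\emph{Cleaner alternative.} Work directly on $[a,b]$ and split it into halves. On the first half, the barrier bound together with the decrease rate shows that $M_\lambda\le0$ at some point $t_1\le (a+b)/2$ once $\lambda$ is large.

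From $t_1$ on, $M_\lambda\le0$ and $\frac{\mathrm{d}M_\lambda}{\mathrm{d}t}\le-\tfrac12(M_\lambda^2+2\lambda\eta)$. We compare with the explicit solution of $y'=-\tfrac12(y^2+2\lambda\eta)$, $y(t_1)=0$, namely
\[
y(t)=-\sqrt{2\lambda\eta}\,\tan\Bigl(\sqrt{\lambda\eta/2}\,(t-t_1)\Bigr).
\]
This solution blows down to $-\infty$ at $t_1+\pi/\sqrt{2\lambda\eta}$. By the comparison principle for scalar ODEs, $M_\lambda\le y$, so $M_\lambda$ blows up no later than that time. For $\lambda\ge\lambda^*$, chosen so that $\pi/\sqrt{2\lambda\eta}<(b-a)/2$, this occurs before $b$. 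This contradicts $\omega_\lambda>b$, and gives $\omega_\lambda\le b\le1$ for all $\lambda\ge\lambda^*$.

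The second statement follows by symmetry. If $N$ is positive somewhere, then $-N$ is negative somewhere, and $\lambda N=(-\lambda)(-N)$, so we apply the first part to $-N$ with parameter $-\lambda$. The main obstacle is the first step: the uniform-in-$\lambda$ upper bound on $M_\lambda$ needed to guarantee that $M_\lambda$ reaches $0$ early inside $[a,b]$. The $\sqrt{2\lambda\|N\|}$ barrier handles this, since $\sqrt\lambda/\lambda\to0$. One also has to check that the extension of $N$ outside $[0,1]$ plays no role, which holds because blow-up happens inside $(0,1)$.
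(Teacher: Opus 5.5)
Your proof is correct and follows essentially the paper's argument: restrict to a subinterval where $N\le-\eta<0$ and compare with the explicit tangent solution of $y'=-\tfrac12(y^2+2\lambda\eta)$, whose blow-down time is $O(\lambda^{-1/2})$. The one difference is your preliminary barrier step $M_\lambda\le\sqrt{2\lambda\|N\|}$, which the paper skips by integrating $\frac{\mathrm{d}}{\mathrm{d}t}\arctan\bigl(M/\sqrt{2\lambda C}\bigr)\le-\sqrt{\lambda C/2}$ from the left endpoint; because $\arctan$ is bounded by $\pi/2$, blow-down then occurs within time $2\pi/\sqrt{2\lambda C}$ for any finite initial value, so the ``main obstacle'' you identify is not actually needed.
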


\begin{proof}
We prove the first statement; the second follows by symmetry.
Choose $t_{1}\in(0,1)$ and $C>0$ with $N(t_{1})=-2C$. By continuity of $N$, there exists $\delta_{1}>0$ such that $t_{1}-\delta_{1}>0$ and $N(t)<-C$ on $(t_{1}-\delta_{1},t_{1})$.
If the solution does not exist up to $t_{1}-\delta_{1}$, then $\omega_{\lambda}\le t_{1}-\delta_{1}<1$ and the claim holds trivially.
Hence we may assume $\omega_{\lambda}>t_{1}-\delta_{1}$, so that $M(t_{1}-\delta_{1})$ is finite. On the interval $(t_{1}-\delta_{1}, t_{1})$, for any $\lambda>0$ we have
\[
\frac{\mathrm{d}M}{\mathrm{d}t} \le -\Bigl(\frac{1}{2}M^{2}+\lambda C\Bigr),
\]
which gives
\[
\frac{\mathrm{d}}{\mathrm{d}t}\Bigl( \arctan \Bigl(\frac{M(t)}{\sqrt{2\lambda C}}\Bigr) \Bigr)
\le -\sqrt{\frac{\lambda C}{2}}.
\]
Integrating from $t_{1}-\delta_{1}$ to $t$ yields
\[
\arctan \Bigl(\frac{M(t)}{\sqrt{2\lambda C}}\Bigr)
\le -\sqrt{\frac{\lambda C}{2}}\bigl(t-\tilde{t}_{1}\bigr),
\]
where $\tilde{t}_{1} = t_{1}-\delta_{1}+\frac{2}{\sqrt{2\lambda C}}\arctan\bigl(\frac{M(t_{1}-\delta_{1})}{\sqrt{2\lambda C}}\bigr)$.
Hence
\[
M(t) \le -\sqrt{2\lambda C}\, \tan\Bigl(\sqrt{\frac{\lambda C}{2}}(t-\tilde{t}_{1})\Bigr).
\]
Thus $M$ blows down to $-\infty$ no later than $t = \frac{\pi}{\sqrt{2\lambda C}} + \tilde{t}_{1}$.
If $\lambda \ge \lambda^{*} := \frac{2}{C}\bigl(\frac{\pi}{\delta_{1}}\bigr)^{2}$, then
\[
\frac{\pi}{\sqrt{2\lambda C}} + \tilde{t}_{1} 
\le \frac{2\pi}{\sqrt{2\lambda C}} + t_{1} - \delta_{1} \le t_{1} < 1,
\]
so $\omega_{\lambda} \le t_{1} < 1$. This completes the proof.
\end{proof}

\begin{lemma}\label{lm:open}
The set $\Lambda = \{\lambda \in \mathbb{R} \mid \omega_{\lambda} > 1\}$ is an open interval containing $0$.
\end{lemma}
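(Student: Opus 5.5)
We have to show three things about $\Lambda$: it contains $0$, it is open, and it is an interval.

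\emph{$0\in\Lambda$.} For $\lambda=0$ the solution is $M_0\equiv0$, which exists on all of $(-1,2)$. Hence $\omega_0=2>1$. (Equivalently, this follows from Lemma~\ref{lm:Riccati}(i) for $|\lambda|\,\|N\|_{C^0}<\frac12$.)

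\emph{Openness.} We use continuous dependence on the parameter. Since $N$ is extended constantly, it is continuous on $(-1,2)$. The right-hand side $\lambda N(t)-\frac12M^2$ is then continuous in $(t,M,\lambda)$ and locally Lipschitz in $M$.

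Fix $\lambda_0\in\Lambda$. Then $M_{\lambda_0}$ exists on $[0,t_1]$ for some $t_1\in(1,\omega_{\lambda_0})$, and it is bounded there by some $K$. Standard continuous dependence gives $\delta>0$ such that for $|\lambda-\lambda_0|<\delta$ the solution $M_\lambda$ exists on $[0,t_1]$ and satisfies $|M_\lambda-M_{\lambda_0}|\le1$.

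If one wants this explicitly, argue as follows. On the maximal subinterval where $|M_\lambda|\le K+1$, Gronwall applied to the difference $D=M_\lambda-M_{\lambda_0}$ gives
\[
|D(t)|\le |\lambda-\lambda_0|\,\|N\|\,t_1\,e^{(K+1/2)t_1}.
\]
This bound is below $1$ for $\lambda$ close to $\lambda_0$, so the subinterval is all of $[0,t_1]$. Hence $\omega_\lambda>t_1>1$, and $\Lambda$ is open.

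\emph{Interval property.} This is the main step. It suffices to show that if $\lambda_1\in\Lambda$ and $\lambda$ lies between $0$ and $\lambda_1$, then $\lambda\in\Lambda$. Take $\lambda_1>0$ (the negative case is symmetric) and write $\lambda=s\lambda_1$ with $s\in[0,1]$.

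First try scaling. Set $\widetilde M(t)=sM_{\lambda_1}(t)$. Then
\[
\widetilde M'+\tfrac12\widetilde M^2=s\lambda_1N-\tfrac12 s(1-s)M_{\lambda_1}^2\le \lambda N,
\]
so $\widetilde M$ is a subsolution. By comparison this gives only the lower bound $M_\lambda\ge \widetilde M$ on their common interval. That prevents blow-down to $-\infty$ but not blow-up to $+\infty$.

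Next, we need an upper bound. Consider
\[
W=\frac{M_\lambda}{s}\quad(s>0),
\]
which satisfies
\[
W'+\tfrac s2W^2=\lambda_1N.
\]
Compare $W$ with $M_{\lambda_1}$, which satisfies $M_{\lambda_1}'+\frac12M_{\lambda_1}^2=\lambda_1N$. Where $W\ge0$, we have $W'\ge\lambda_1N-\frac12W^2$, which only yields the lower bound $W\ge M_{\lambda_1}$ again. So we need an upper bound from a different source.

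The key observation is that blow-up of a Riccati solution $M'=\lambda N-\frac12M^2$ can occur only to $-\infty$. Indeed, $M'\le\lambda\|N\|$, so $M\le \lambda\|N\|\,t$ and $M$ is bounded above on bounded intervals. Finite-time blow-up is therefore necessarily $M\to-\infty$. Consequently the lower bound $M_\lambda\ge sM_{\lambda_1}$ suffices: $M_{\lambda_1}$ is bounded below on $[0,t_1]$ with $t_1>1$, so $M_\lambda$ stays bounded on $[0,t_1]$, and $\omega_\lambda>1$.

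For the comparison step, let $D=M_\lambda-\widetilde M$. Then $D(0)=0$ and
\[
D'\ge-\tfrac12(M_\lambda+\widetilde M)\,D.
\]
Multiplying by an integrating factor gives $D\ge0$ on the common interval, which is the required lower bound.

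Combining the three parts, $\Lambda$ is an open interval containing $0$. The main obstacle is the interval property, and it is resolved by the one-sided blow-up observation together with the scaled subsolution $sM_{\lambda_1}$.
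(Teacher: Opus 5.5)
Your proof is correct. For $0\in\Lambda$ and for openness it matches the paper; the paper cites lower semicontinuity of $\omega_\lambda$ from Hartman, where you supply the explicit Gronwall estimate. For the interval property you take a different and more economical route.

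\textbf{The paper's route.} It passes to the connected component $(\underline{\lambda},\overline{\lambda})$ of $\Lambda$ containing $0$ and argues case by case on the sign of $N$:
\begin{enumerate}[(a)]
\item Lemma~\ref{lm:lm1} makes the relevant endpoint finite, and openness gives $\omega_{\overline{\lambda}}\le 1$.
\item A comparison bound $M_\lambda\le\sqrt{2|\lambda|C_1}$ shows that every singularity is a blow-down.
\item The rescaled function $v=(\lambda/\overline{\lambda})M_{\overline{\lambda}}$ is a supersolution for $\lambda>\overline{\lambda}$, and this forces $\omega_\lambda\le 1$ beyond the endpoint.
\item A separate monotonicity-in-$\lambda$ comparison gives $\underline{\lambda}=-\infty$ when $N\le 0$.
\end{enumerate}

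\textbf{Your route.} Your subsolution $sM_{\lambda_1}$, $s\in[0,1]$, is the contrapositive of that same scaling step. It shows directly that $\Lambda$ is star-shaped about $0$. Combined with the one-sided bound $M_\lambda\le|\lambda|\,\|N\|\,t$, it needs no case distinction on $N$, no use of Lemma~\ref{lm:lm1}, and no knowledge of $\omega$ at the endpoints.

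\textbf{What each buys.} The paper's case analysis simultaneously yields the classification (i)--(iv) of Theorem~\ref{thm:Auxiliary}, i.e.\ whether $\underline{\lambda}$ and $\overline{\lambda}$ are finite. Your argument does not touch that, but it is not part of this lemma.
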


\begin{proof}
\underline{Step 1.} \emph{Openness.}
If $\lambda=0$, equation \eqref{eqn:sclRiccati} becomes $\frac{\mathrm{d}M}{\mathrm{d}t} + \frac12 M^{2} = 0$ with $M(0)=0$, whose unique solution is $M_{0}\equiv0$. By the constant extension of $N$, this solution exists on $(-1,2)$; hence $\omega_{0} = 2 > 1$, so $0\in\Lambda$ and $\Lambda \not =\emptyset$.
Openness follows from the lower semicontinuity of $\omega_{\lambda}$ with respect to $\lambda$ (see \cite[Chapter~V, Theorem~2.1]{Hartman02}). Indeed, for $\lambda_{1}\in\Lambda$ we have $\omega_{\lambda_{1}}>1$; choose $\delta_2>0$ such that $\omega_{\lambda_{1}}>1+\delta_2$. By lower semicontinuity there exists $\varepsilon>0$ such that
$\omega_{\lambda} \ge \omega_{\lambda_{1}} - \frac{\delta_2}{2} > 1 + \frac{\delta_2}{2} > 1$ whenever $|\lambda-\lambda_{1}|<\varepsilon$. Thus $(\lambda_{1}-\varepsilon,\lambda_{1}+\varepsilon)\subset\Lambda$, proving openness.

\underline{Step 2.} \emph{Interval property.}
Since $\Lambda$ is open and contains $0$, it is a union of disjoint open intervals; let $(\underline{\lambda},\overline{\lambda})$ be the connected component containing $0$. We show that actually $\Lambda = (\underline{\lambda},\overline{\lambda})$. Denote $C_{1} := \max\limits_{t\in[0,1]}|N(t)|$.

\underline{Case 1.} \emph{$N \le 0$ on $(0,1)$ and $N \not\equiv 0$.}
Lemma~\ref{lm:lm1} forces $0<\overline{\lambda}<+\infty$ and the openness of $\Lambda$ gives $\omega_{\overline{\lambda}}\le 1$.
We prove that $\underline{\lambda}=-\infty$ and that $\omega_{\lambda}\le 1$ for all $\lambda>\overline{\lambda}$.
Since $\omega_{\lambda}>1$ for $\lambda\in[0,\overline{\lambda})$, it suffices to prove the same inequality for all $\lambda<0$; this will force $\underline{\lambda}=-\infty$.
To this end, fix any $\lambda<0$ and choose a $\lambda_2 \in (0,\overline{\lambda})$; then $\omega_{\lambda_2}>1$.
Since $N\le0$ and $\lambda<\lambda_{2}$, we have
\[
\lambda_{2} N - \frac12 M^{2} \le \lambda N - \frac12 M^{2} \le |\lambda| C_{1} - \frac12 M^{2}.
\]
By standard comparison theorems (see \cite[Chapter~III, Corollary~4.2]{Hartman02}),
\begin{align}\label{eqn:lm2Comp}
M_{\lambda_{2}}(t) \le M_{\lambda}(t) \le \sqrt{2|\lambda| C_{1}}.
\end{align}
Thus $\omega_{\lambda} \ge \omega_{\lambda_{2}} > 1$ for every $\lambda<0$, forcing $\underline{\lambda}=-\infty$.
Moreover, the upper bound in \eqref{eqn:lm2Comp} shows that any finite-time singularity is a blow-down to $-\infty$.
Since $\omega_{\overline{\lambda}} \le 1$, $M_{\overline{\lambda}}$ blows down no later than $t=1$.

Now take any $\lambda > \overline{\lambda}>0$ and define $v(t) = \lambda\frac{M_{\overline{\lambda}}(t)}{\overline{\lambda}}$.
A direct computation gives
\[
\frac{\mathrm{d}v}{\mathrm{d}t} = \lambda N - \frac12 \frac{\overline{\lambda}}{\lambda} v^{2}
\ge \lambda N - \frac12 v^{2},
\]
where we used $0<\overline{\lambda}/\lambda<1$. Since $M_{\lambda}(0)=v(0)=0$, comparison gives $M_{\lambda}\le v$. Because $v$ blows down no later than $t=1$, we get $\omega_{\lambda}\le1$ for all $\lambda>\overline{\lambda}$.

\underline{Case 2.} \emph{$N \ge 0$ on $(0,1)$ and $N \not\equiv 0$.}
By symmetry, Lemma~\ref{lm:lm1} gives $-\infty<\underline{\lambda}<0$, openness yields $\omega_{\underline{\lambda}}\le1$, and the same comparison argument shows $\overline{\lambda}=+\infty$ and $\omega_{\lambda}\le1$ for all $\lambda<\underline{\lambda}$. We omit the details.

\underline{Case 3.} \emph{$N$ changes sign on $(0,1)$.}
Lemma~\ref{lm:lm1} implies $-\infty<\underline{\lambda}<0<\overline{\lambda}<+\infty$, together with $\omega_{\underline{\lambda}}\le1$ and $\omega_{\overline{\lambda}}\le1$.
We must show $\omega_{\lambda}\le1$ for $\lambda\notin[\underline{\lambda},\overline{\lambda}]$.
From \eqref{eqn:lm2Comp} we know that any blow-up is a blow-down to $-\infty$, and $M_{\underline{\lambda}},M_{\overline{\lambda}}$ blow down no later than $t=1$.
For $\lambda > \overline{\lambda}$, the scaling $v(t)=\lambda\frac{M_{\overline{\lambda}}(t)}{\overline{\lambda}}$ again satisfies $\frac{\mathrm{d}v}{\mathrm{d}t}\ge \lambda N - \frac12 v^{2}$, hence $M_{\lambda}\le v$ and $\omega_{\lambda}\le1$.
For $\lambda < \underline{\lambda}$, setting $v(t)=\lambda\frac{M_{\underline{\lambda}}(t)}{\underline{\lambda}}$ yields $\frac{\mathrm{d}v}{\mathrm{d}t}\ge \lambda N - \frac12 v^{2}$, since $0<\underline{\lambda}/\lambda<1$; therefore $M_{\lambda}\le v$ and $\omega_{\lambda}\le1$.
\end{proof}

\begin{lemma}\label{lm:exactblow}
If $\underline{\lambda}$ (resp.\ $\overline{\lambda}$) is finite, then $\omega_{\underline{\lambda}}=1$ (resp.\ $\omega_{\overline{\lambda}}=1$).
\end{lemma}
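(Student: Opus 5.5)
We treat $\overline{\lambda}$; the case of $\underline{\lambda}$ is symmetric. From Lemma~\ref{lm:open} we already know that $\omega_{\overline{\lambda}}\le 1$, since $\Lambda$ is open and $\overline{\lambda}\notin\Lambda$. It therefore remains to show $\omega_{\overline{\lambda}}\ge 1$. We argue by contradiction, supposing that $\omega_{\overline{\lambda}}=t^*<1$.

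By \eqref{eqn:lm2Comp} and the argument in Lemma~\ref{lm:open}, any finite-time singularity is a blow-down. Indeed, for $\lambda$ in a bounded range one has $M_\lambda\le \sqrt{2|\lambda|C_1}$ as long as the solution exists. This follows by comparison with the constant supersolution $\sqrt{2|\lambda| C_1}$, since $\lambda N-\frac12 M^2\le |\lambda|C_1-\frac12M^2$. Hence $M_{\overline{\lambda}}(t)\to-\infty$ as $t\to t^*$.

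Next we quantify the blow-down. Choose a point $t_2\in(t^*-\delta,t^*)$, with $\delta$ small, at which $M_{\overline{\lambda}}(t_2)\le -K$ for a large $K$ to be fixed. On $[t_2,2)$ we have $\frac{dM}{dt}\le \overline{\lambda}' C_1-\frac12 M^2$ for all $\lambda$ near $\overline{\lambda}$, with $\overline{\lambda}'=|\overline{\lambda}|+1$. So once $M\le -K$ with $K^2>4\overline{\lambda}'C_1$, we get $\frac{dM}{dt}\le -\frac14 M^2$. Integrating gives blow-down within time $4/K$. We take $K$ so large that $t_2+4/K<1$; this is possible because $t^*<1$.

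Now we use continuous dependence on the parameter. The solution $M_{\overline{\lambda}}$ exists on the compact interval $[0,t_2]$, so for $\lambda$ sufficiently close to $\overline{\lambda}$, $M_\lambda$ exists on $[0,t_2]$ and is uniformly close to $M_{\overline{\lambda}}$ there \cite[Chapter~V]{Hartman02}. In particular $M_\lambda(t_2)\le -K/2$. Applying the Riccati inequality above with $K/2$ in place of $K$ (enlarging $K$ beforehand so that $t_2+8/K<1$ and $K^2/4>4\overline{\lambda}'C_1$), we find that $M_\lambda$ blows down before $t=1$. Thus $\omega_\lambda<1$ for all $\lambda$ in a neighbourhood of $\overline{\lambda}$, including $\lambda<\overline{\lambda}$ close to it. This contradicts $(\underline{\lambda},\overline{\lambda})\subset\Lambda$, and therefore $\omega_{\overline{\lambda}}=1$.

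The main delicate point is making the blow-down estimate uniform in $\lambda$ near $\overline{\lambda}$. This is handled by the bound $\frac{dM}{dt}\le -\frac14M^2$ once $M$ is sufficiently negative, which does not depend on the particular $\lambda$. One must also check that the constant extension of $N$ to $(-1,2)$ keeps $C_1$ unchanged. It does, because the extension is by the boundary values $N(0)$ and $N(1)$, so the maximum of $|N|$ on $(-1,2)$ equals its maximum on $[0,1]$. For $\underline{\lambda}$ the same argument applies verbatim, with $\underline{\lambda}'=|\underline{\lambda}|+1$.
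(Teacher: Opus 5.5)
Your proof is correct and follows essentially the same route as the paper. Both arguments suppose $\omega_{\overline{\lambda}}<1$, use the upper bound $M_\lambda\le\sqrt{2|\lambda|C_1}$ to see that the singularity is a blow-down, and pick a point just before $\omega_{\overline{\lambda}}$ where $M_{\overline{\lambda}}$ is very negative; continuous dependence then gives a nearby $\lambda<\overline{\lambda}$ with the same property, and the bound $\frac{\mathrm{d}M}{\mathrm{d}t}\le-\frac14M^2$ forces blow-down before $t=1$, contradicting $\lambda\in\Lambda$. The only cosmetic differences are that the paper compares with an auxiliary solution of $v'=\overline{\lambda}C_1-\frac12v^2$ rather than integrating the inequality for $M_\lambda$ directly, and that it uses a single $\lambda_3$ rather than a neighbourhood.
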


\begin{proof}
We prove $\omega_{\overline{\lambda}}=1$ by contradiction; the case for $\underline{\lambda}$ is analogous.
Suppose $\omega_{\overline{\lambda}}<1$. Choose $t_{R}$ with $t_{R}<\omega_{\overline{\lambda}}<1$ and a number $R>0$ to be specified later such that $M_{\overline{\lambda}}(t_{R})<-2R$.
By continuous dependence on the parameter (see \cite[Chapter~V, Theorem~2.1]{Hartman02}), there exists $\lambda_{3}\in(\underline{\lambda},\overline{\lambda})$ close to $\overline{\lambda}$ with $M_{\lambda_{3}}(t_{R}) < -R$.
Let $C_{1} := \max\limits_{t\in[0,1]}|N(t)|$. Then
\[
\frac{\mathrm{d}M_{\lambda_{3}}}{\mathrm{d}t} = \lambda_{3} N - \frac12 M_{\lambda_{3}}^{2} \le \overline{\lambda} C_{1} - \frac12 M_{\lambda_{3}}^{2}.
\]
Consider the auxiliary problem
\[
\frac{\mathrm{d}v}{\mathrm{d}t} = \overline{\lambda} C_{1} - \frac12 v^{2},\qquad v(t_{R}) = -R.
\]
Choose $R > \sqrt{4\overline{\lambda} C_{1}}$. Then $\frac{\mathrm{d}v}{\mathrm{d}t}(t_{R})<0$, and one checks that $v(t)\le -R$, $\frac{\mathrm{d}v}{\mathrm{d}t}(t)<0$ for all $t\ge t_{R}$ as long as the solution exists.
Moreover, for $v\le -R$ we have $\frac14 v^{2} \ge \overline{\lambda} C_{1}$, hence
\[
\frac{\mathrm{d}v}{\mathrm{d}t} = \overline{\lambda} C_{1} - \frac12 v^{2} \le \frac14 v^{2} - \frac12 v^{2} = -\frac14 v^{2}.
\]
Integrating from $t_{R}$ to $t$ gives
\[
-\frac{1}{v(t)} \le \frac{1}{R} - \frac14 (t - t_{R}),
\]
and because $v(t)<0$, the right-hand side must stay positive, which forces
$t < t_{R} + \frac{4}{R}$.
Thus $v$ blows down to $-\infty$ no later than $t_{R} + 4/R$.
Choosing $R > \max\bigl\{ \sqrt{4\overline{\lambda} C_{1}},\; \frac{4}{1-\omega_{\overline{\lambda}}} \bigr\}$ guarantees $t_{R} + 4/R < 1$.
By comparison, $M_{\lambda_{3}}(t)\le v(t)$ on their common interval, so $M_{\lambda_{3}}$ also blows down before $t=1$, contradicting $\omega_{\lambda_{3}}>1$. Hence $\omega_{\overline{\lambda}}=1$.
\end{proof}

\begin{proof}[\textbf{Proof of Theorem~\ref{thm:B}}]
Theorem~\ref{thm:B} follows directly from Theorem~\ref{thm:Auxiliary}. Indeed, the Navier--Stokes system with force $\lambda\BF$ reduces to the Riccati equation \eqref{eqn:sclRiccati}. By Theorem~\ref{thm:Auxiliary}, the set of admissible $\lambda$ is an open interval $(\underline{\lambda},\overline{\lambda})$ characterized by the sign of $N$ on $(0,1)$.
It remains to verify that the discriminant $\mathcal{N}$ and the function $N$ have the same sign.
Since $N(t)=-\frac{1}{(1-t^{2})^{2}}\int_{t}^{1} J(s)\,\mathrm{d}s$, the sign of $N$ is determined by $-\int_{t}^{1} J(s)\,\mathrm{d}s$.
Using the change of variable $s=\cos\tau$, we compute
\begin{equation}\label{eqn:PfThmB}
\begin{aligned}
-\int_{t}^{1} J(s)\, \mathrm{d}s
&= -\int_{\varphi}^{0} \big( \int_0^{\tau} (G(\sigma) + 2\mcC) \sin \sigma \, \mathrm{d}\sigma \big) \, \mathrm{d}(\cos\tau)\\
&= \cos \varphi \int_0^{\vp} (G(\tau) + 2\mcC) \sin \tau \, \mathrm{d}\tau  - \int_{0}^{\varphi} (G(\tau)+2\mcC)\sin \tau \cos \tau \, \mathrm{d}\tau\\
&= \int_{0}^{\varphi} (G(\tau)+2\mcC) \sin \tau (\cos \varphi - \cos \tau) \, \mathrm{d}\tau = \mathcal{N}(\varphi).
\end{aligned}
\end{equation}
Thus $\operatorname{sgn} N(t) = \operatorname{sgn} \mathcal{N}(\varphi)$, and the classification in Theorem~\ref{thm:Auxiliary} translates exactly into the four cases listed in Theorem~\ref{thm:B}.
\end{proof}

\begin{remark}[Blow-up location]
If $\underline{\lambda}$ (resp.\ $\overline{\lambda}$) is finite, the equality $\omega_{\underline{\lambda}}=1$ (resp.\ $\omega_{\overline{\lambda}}=1$) means that the solution $M(t)$ of the Riccati equation exists on $[0,1)$ and blows down as $t\to 1^{-}$. In terms of $\varphi$, this corresponds to a solution that is well defined on $(0,\pi/2]$ but blows down exactly at the north pole $\varphi=0$.
\end{remark}

\begin{remark}[Structure of the discriminant]
Introduce the weight functions
\[
\mathcal{W}_1(s)=2\sin s\cos s,\qquad
\mathcal{W}_{2,\varphi}(\tau)=\sin\tau(\cos\varphi-\cos\tau).
\]
For a swirl-free force $\BF=\rho^{-3}(X\Be_\rho+Y\Be_\varphi)$, the curl reduces to
$\nabla\times\BF = -\rho^{-4}(X'+2Y)\Be_\theta$.
From \eqref{eqn:disfcn} and the definition of $\mcC$ one obtains
\[
\mathcal{N}(\varphi)
= \int_{0}^{\varphi} \mathcal{W}_{2,\varphi}(\tau)\bigl(G(\tau)+2\mcC\bigr)\,\mathrm{d}\tau
= \int_{0}^{\varphi} \mathcal{W}_{2,\varphi}(\tau)
   \Biggl[ \int_{0}^{\pi/2} \mathcal{W}_1(s)
      \biggl( \int_{s}^{\tau} (X'+2Y)(t)\,\mathrm{d}t \biggr)
      \,\mathrm{d}s \Biggr] \mathrm{d}\tau .
\]
Thus $\mathcal{N}$ is precisely the tangential curl $X'+2Y$ integrated first along the meridian
and then against the two spherical-cap weight functions $\mathcal{W}_1$ and $\mathcal{W}_{2,\varphi}$.
Consequently, the sign of $\mathcal{N}(\varphi)$ is completely determined by the distribution of the
tangential curl.

The discriminant $\mathcal{N}$ also satisfies
\begin{align}\label{eqn:discriminantN}
\mathcal{L}[\mathcal{N}] = -(G+2\mcC)\sin^{2}\varphi,\qquad
\mathcal{N}(0)=\mathcal{N}(\tfrac{\pi}{2})=0,
\end{align}
where $\mathcal{L}= \frac{\mathrm{d}^{2}}{\mathrm{d}\varphi^{2}}-\cot\varphi\frac{\mathrm{d}}{\mathrm{d}\varphi}
= \sin\varphi\frac{\mathrm{d}}{\mathrm{d}\varphi}\Bigl(\frac{1}{\sin\varphi}\frac{\mathrm{d}}{\mathrm{d}\varphi}\Bigr)$.
This is precisely the angular part of the Stokes stream-function operator
$E^{2}= \frac{\partial^{2}}{\partial\rho^{2}} + \frac{\sin\varphi}{\rho^{2}}\frac{\partial}{\partial\varphi}\!\bigl(\frac{1}{\sin\varphi}\frac{\partial}{\partial\varphi}\bigr)$
(see, e.g., \cite[eq.~(4-7.18)]{HB65}) under the spherical coordinates.
Solving the Dirichlet problem for $\mathcal{L}$ gives the integral representation of $\mathcal{N}$, thus
$\mathcal{W}_{2,\varphi}(\tau)=\sin\tau(\cos\varphi-\cos\tau)$ is exactly the Green's function of $\mathcal{L}$
multiplied by the source weight $-\sin^{2}\tau$.

\end{remark}

Notice that evaluating \eqref{eqn:PfThmB} at $t=0$ (i.\,e.\ $\varphi=\pi/2$) gives
\[
\int_{0}^{1} J(s)\,\mathrm{d}s = \int_{0}^{\pi/2} \bigl(G(\varphi)+2\mathcal{C}\bigr)\sin\varphi\cos\varphi\,\mathrm{d}\varphi .
\]
Combined with the compatibility condition $\int_{0}^{1} J(s)\,\mathrm{d}s = 0$ and the positivity of
$\sin\varphi\cos\varphi$ on $(0,\pi/2)$, this forces $G+2\mcC$ to change sign in $(0,\pi/2)$.  Since $G+2\mathcal{C}$ must change sign, the source term in \eqref{eqn:discriminantN} is sign-changing; the maximum principle for $\mathcal{L}$ does not determine the sign of $\mathcal{N}$, and three distinct cases arise, as illustrated in Example \ref{example:N}.

\section{Analysis of solutions for a swirling external force}\label{sec:withswirl}

\subsection{Reduction for external forces with swirl}\label{sec:reduct-swirl}

We now consider the general case where the external force $\BF$ possesses swirl. The system reads
\begin{equation}\label{eqn:hDgeneral1}
\left\{
\begin{aligned}
   f^{\prime\prime} + f^{\prime} \cot \varphi &= gf^{\prime} -(f^2 + g^2 + h^2) - 2P - X, \\
   f^{\prime} &= gg^{\prime} - h^2 \cot \varphi + P^{\prime} - Y, \\ 
   (h^\prime + h\cot \varphi)^{\prime} &= g(h^\prime + h\cot \varphi) - Z,\\
   f+ g^\prime + g\cot \varphi &=0,
\end{aligned}
\right.
\end{equation}
together with the boundary conditions
\begin{align*}
f^\prime(0) = g(0) = h(0) = 0, \quad
  f\Bigl(\frac{\pi}{2}\Bigr) = g\Bigl(\frac{\pi}{2}\Bigr) = h\Bigl(\frac{\pi}{2}\Bigr) = 0.
\end{align*}

From \eqref{eqn:hDgeneral1}$_2$ we obtain 
\[
\frac12 g^{2} + P = f + \int_{0}^{\varphi}\!\bigl( Y(\tau) + h^{2}\cot\tau \bigr)\,\mathrm{d}\tau + \mathcal{C}.
\]
Substituting this into \eqref{eqn:hDgeneral1}$_1$ and using the continuity equation \eqref{eqn:hDgeneral1}$_4$ yields an equation identical to \eqref{eqn:reduct1},
but now with $G$ given by
\begin{equation}\label{eqn:genNotation}
G(\varphi) = X(\varphi) + h^{2}(\varphi) + 2\int_{0}^{\varphi} \bigl( Y(\tau) + h^{2}(\tau)\cot\tau \bigr)\,\mathrm{d}\tau .
\end{equation}

Proceeding now exactly as in Section~\ref{sec:reductNo-swirl} leads to the Riccati equation
\[
\frac{\mathrm{d}M}{\mathrm{d}t} + \frac12 M^{2} = N,\quad t\in[0,1],\qquad M(0)=0,
\]
where $t=\cos\varphi$, $M(t)=\frac{g(\varphi)}{\sin\varphi}$, and
\[
N(t) = -\frac{1}{(1-t^{2})^{2}}\int_{t}^{1} J(s)\,\mathrm{d}s,\quad
J(t) = \int_{0}^{\varphi} \bigl( G(\tau) + 2\mathcal{C} \bigr)\sin\tau\,\mathrm{d}\tau .
\]

The constant $\mathcal{C}$ is fixed by the compatibility condition $\int_{0}^{1} J(s)\,\mathrm{d}s=0$.
Writing this condition explicitly and inserting \eqref{eqn:genNotation} yields
\begin{align}\label{eqn:swirlconstC}
\mathcal{C}
= -\int_{0}^{\pi/2} G(\tau)\sin\tau\cos\tau\,\mathrm{d}\tau 
= -\int_{0}^{\pi/2}\big( X\sin\varphi\cos\varphi + Y\cos^{2}\varphi
+ h^{2}\cot\vp\big)\,\mathrm{d}\varphi ,
\end{align}
where we used the identity
\[
\int_{0}^{\pi/2} h^{2}\sin\varphi\cos\varphi\,\mathrm{d}\varphi
+ 2\int_{0}^{\pi/2}\!\biggl( \int_{0}^{\varphi} h^{2}(\tau)\cot\tau\,\mathrm{d}\tau \biggr)\sin\varphi\cos\varphi\,\mathrm{d}\varphi
= \int_{0}^{\pi/2} h^{2}\cot\varphi\,\mathrm{d}\varphi.
\]

Thus the full system \eqref{eqn:hDgeneral1} reduces to the above Riccati equation coupled with the linear boundary value problem for $h$,
\begin{equation}\label{eqn:swirlh}
(h' + h\cot\varphi)' = g\,(h' + h\cot\varphi) - Z, \quad \vp \in \Bigl[0,\frac{\pi}{2}\Bigr],
\qquad h(0)=h\Bigl(\frac{\pi}{2}\Bigr)=0,
\end{equation}
where $g(\varphi)=M(\cos\varphi)\sin\varphi$.  If $Z\equiv0$, then $h\equiv0$ and the system reduces to the swirl-free case treated in Section~\ref{sec:reductNo-swirl}.

\subsection{Existence and uniqueness for small forces with swirl}\label{sec:ExistSwirl}

We now establish the estimates for the boundary value problem \eqref{eqn:swirlh}.
For later use on solid cones, the lemma is stated and proved on an arbitrary angular interval $[0,\varphi_0]$ with $\varphi_0\in(0,\pi)$. The half-space case is recovered by taking $\varphi_0=\frac{\pi}{2}$.

\begin{lemma}\label{lm:ODEEst}
Consider the boundary value problem
\[
(h' + h\cot \varphi)' = g\,(h' + h\cot \varphi) - Z, \quad \vp \in [0,\varphi_0],\qquad h(0)=h(\varphi_0)=0,
\]
with $\varphi_0\in(0,\pi)$ and $Z\in C^0[0,\varphi_0]$.
\begin{enumerate}[(i)]
\item If $g\in C^0[0,\varphi_0]$, there exists a unique solution $h\in C^1[0,\varphi_0]$, and
\[
\|h\|_{C^1[0,\varphi_0]} \le C(\varphi_0)\Bigl(2\varphi_0 e^{2\varphi_0\|g\|_{C^0[0,\varphi_0]}}(1+\|g\|_{C^0[0,\varphi_0]}) + 1\Bigr)\|Z\|_{C^0[0,\varphi_0]}.
\]
\item For $g_1,g_2\in C^0[0,\varphi_0]$ and the same $Z$, let $h_1,h_2$ be the corresponding solutions.
Then
\begin{align*}
&\|h_1-h_2\|_{C^1[0,\varphi_0]} \\
&\le C(\varphi_0)\prod_{i=1}^2
\Bigl(2\varphi_0 e^{2\varphi_0\|g_i\|_{C^0[0,\varphi_0]}}(1+\|g_i\|_{C^0[0,\varphi_0]}) + 1\Bigr)
\|Z\|_{C^0[0,\varphi_0]}\|g_1-g_2\|_{C^0[0,\varphi_0]}.    
\end{align*}
\end{enumerate}
The constant $C(\varphi_0)$ depends only on $\varphi_0$.
\end{lemma}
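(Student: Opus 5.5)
Set $H:=h'+h\cot\varphi=\frac{(h\sin\varphi)'}{\sin\varphi}$. The equation is then a first-order linear ODE for $H$, namely $H'=gH-Z$. We solve it with an integrating factor and fix the single free constant by the two boundary conditions, which together amount to one integral constraint on $H$.

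Concretely, let $E(\varphi):=\exp\bigl(-\int_0^\varphi g\bigr)$. Every solution has the form
\[
H(\varphi)=E(\varphi)^{-1}\Bigl(c-\int_0^\varphi E(s)Z(s)\,\rmd s\Bigr).
\]
Since $h\sin\varphi=\int_0^\varphi H(s)\sin s\,\rmd s$, the condition $h(0)=0$ holds automatically. This is consistent because $h=\frac{1}{\sin\varphi}\int_0^\varphi H\sin s$ is $O(\varphi)$ near $0$. The condition $h(\varphi_0)=0$ is equivalent to $\int_0^{\varphi_0}H\sin s\,\rmd s=0$, which determines
\[
c=\frac{\int_0^{\varphi_0}E^{-1}(s)\sin s\int_0^sEZ\,\rmd s}{\int_0^{\varphi_0}E^{-1}\sin s\,\rmd s}.
\]
The denominator is positive, since $E^{-1}>0$ and $\sin>0$ on $(0,\varphi_0)$ because $\varphi_0<\pi$. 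This gives existence. For uniqueness, the homogeneous problem ($Z=0$) forces $H=cE^{-1}$ with $c\int E^{-1}\sin=0$, hence $c=0$, so $H\equiv0$ and $h\equiv0$; this is the same argument as the claim in Section~\ref{sec:reductNo-swirl}.

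\textbf{Estimates.} Write $\|g\|=\|g\|_{C^0[0,\varphi_0]}$. Then $e^{-\varphi_0\|g\|}\le E\le e^{\varphi_0\|g\|}$, so $E$ and $E^{-1}$ are bounded by $e^{\varphi_0\|g\|}$ and every product of the form $E^{-1}(\varphi)E(s)$ is bounded by $e^{2\varphi_0\|g\|}$. This gives $|c|\le \varphi_0e^{2\varphi_0\|g\|}\|Z\|$, and therefore $\|H\|_{C^0}\le 2\varphi_0e^{2\varphi_0\|g\|}\|Z\|$. From $h=\frac{1}{\sin\varphi}\int_0^\varphi H\sin$ we get $\|h\|_{C^0}\le C(\varphi_0)\|H\|_{C^0}$. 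Next, $h'=H-h\cot\varphi$, and we control $h\cot\varphi$ as in the proof of Theorem~\ref{thm:A}:
\[
h\cot\varphi=\frac{\cos\varphi}{\sin^2\varphi}\int_0^\varphi H(s)\sin s\,\rmd s.
\]
This is bounded by $C\|H\|$ near $0$, since $\int_0^\varphi \sin\le \varphi^2/2$. Near $\varphi_0$ it is bounded by $C(\varphi_0)\|H\|$ because $\sin\varphi_0>0$; this is where $C(\varphi_0)$ degenerates as $\varphi_0\to\pi$. Collecting terms, $\|h\|_{C^1}\le C(\varphi_0)\|H\|_{C^0}$. To reach the precise form $C(\varphi_0)(2\varphi_0e^{2\varphi_0\|g\|}(1+\|g\|)+1)\|Z\|$ stated in (i), we may also use $H'=gH-Z$, i.e.\ $\|H\|_{C^1}\le (1+\|g\|)\|H\|_{C^0}+\|Z\|$, whichever form of the bound is more convenient downstream. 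Continuity of $h'$ at $0$ also follows, since $H$ is continuous.

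\textbf{Lipschitz estimate (ii).} The difference $w=h_1-h_2$ satisfies the same type of problem with $g=g_1$ and source $-(g_1-g_2)H_2$ in place of $-Z$:
\[
W'=g_1W+(g_1-g_2)H_2,\qquad W:=w'+w\cot\varphi,\qquad w(0)=w(\varphi_0)=0.
\]
Applying (i) with $Z$ replaced by $-(g_1-g_2)H_2$ gives a bound of $C(\varphi_0)\,K(g_1)\,\|g_1-g_2\|\,\|H_2\|_{C^0}$, where $K(g)$ denotes the factor in (i). Bounding $\|H_2\|\le K(g_2)\|Z\|$ then yields the product form.

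The main obstacle is the singular coefficient $\cot\varphi$ at both endpoints: we must bound $h\cot\varphi$ uniformly and verify that $h\in C^1$ up to $\varphi=0$. The representation via $\int_0^\varphi H\sin$ handles this, as described above.
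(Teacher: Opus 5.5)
Your proof is correct and follows the paper's argument: the substitution $H=h'+h\cot\varphi$, the integrating-factor formula with $c=H(0)$ fixed by $\int_0^{\varphi_0}H\sin\varphi\,\rmd\varphi=0$, and the difference equation with source $-(g_1-g_2)H_2$ for (ii) are all the same; the one variation is that you bound $h'$ through $h'=H-h\cot\varphi$, which gives $\|h\|_{C^1}\le C(\varphi_0)\|H\|_{C^0}$, instead of differentiating the analytic kernel $\varphi\sin(\varphi s)/\sin\varphi$. One bookkeeping fix is needed to get the exact factor $2\varphi_0e^{2\varphi_0\|g\|}$: bound $c$ as a positively weighted average of $\int_0^sEZ$, i.e.\ $|c|\le\varphi_0e^{\varphi_0\|g\|}\|Z\|$ as the paper does, because your stated $|c|\le\varphi_0e^{2\varphi_0\|g\|}\|Z\|$ combined with $E^{-1}\le e^{\varphi_0\|g\|}$ only yields $e^{3\varphi_0\|g\|}$.
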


\begin{proof}
(i) For simplicity we omit the interval $[0,\varphi_0]$ in the norm notation.
Set $H = h' + h\cot\varphi$.  From $H\sin\varphi = (h\sin\varphi)'$ and the boundary conditions,
\[
H' - gH = -Z,\qquad \int_0^{\varphi_0} H(\varphi)\sin\varphi\,\mathrm{d}\varphi = 0.
\]
The linear ODE gives $H(\varphi) = \tilde g(\varphi)(H(0) - \tilde Z(\varphi))$ where
$\tilde g = \exp\int_0^\varphi g$, $\tilde Z = \int_0^\varphi Z\exp(-\int_0^t g)\,\rmd t$.
The integral condition yields
\[
H(0) = \frac{\int_0^{\varphi_0}\tilde g \tilde Z \sin\varphi\,\mathrm{d}\varphi}
            {\int_0^{\varphi_0}\tilde g \sin\varphi\,\mathrm{d}\varphi},
\]
hence $|H(0)| \le \|\tilde Z\|_{C^0} \le \varphi_0 e^{\varphi_0\|g\|_{C^0}}\|Z\|_{C^0}$.
Then
\begin{align}\label{eqn:lm2EstH}
\|H\|_{C^0} \le e^{\varphi_0\|g\|_{C^0}}\bigl(|H(0)|+\|\tilde Z\|_{C^0}\bigr)
          \le 2\varphi_0 e^{2\varphi_0\|g\|_{C^0}}\|Z\|_{C^0}.
\end{align}
From $H' = gH - Z$ we get
\[
\|H'\|_{C^0} \le \|g\|_{C^0}\|H\|_{C^0} + \|Z\|_{C^0}
\le \bigl(2\varphi_0 e^{2\varphi_0\|g\|_{C^0}}\|g\|_{C^0} + 1\bigr)\|Z\|_{C^0},
\]
so that
\[
\|H\|_{C^1} \le \bigl(2\varphi_0 e^{2\varphi_0\|g\|_{C^0}}(1+\|g\|_{C^0}) + 1\bigr)\|Z\|_{C^0}.
\]
The function $h$ is recovered by
\begin{align}\label{eqn:BVPKernel}
h(\varphi) = \frac{1}{\sin\varphi}\int_0^\varphi H(\tau)\sin\tau\,\rmd\tau
= \int_0^1 H(\varphi s)\frac{\varphi\sin(\varphi s)}{\sin\varphi}\,\rmd s.
\end{align}
L'H\^opital's rule and the integral condition give $h(0)=h(\varphi_0)=0$.
The kernel $K(\varphi,s)=\frac{\varphi\sin(\varphi s)}{\sin\varphi}$ is real analytic on $[0,\varphi_0]\times[0,1]$ because $\varphi_0<\pi$. Hence $\|h\|_{C^{1}}\le C(\varphi_{0})\|H\|_{C^{1}}$ for a constant $C(\varphi_{0})$ depending only on $\varphi_{0}$. This proves (i).

\noindent(ii) Let $h_1,h_2$ correspond to $g_1,g_2$.  Set $\tilde h = h_1-h_2$, $\tilde H = \tilde h'+\tilde h\cot\varphi$, and $H_2 = h_2'+h_2\cot\varphi$.  Subtracting the equations,
\[
\tilde H' - g_1\tilde H = (g_1-g_2)H_2,\qquad \tilde h(0)=\tilde h(\varphi_0)=0.
\]
Since $\tilde H$ fulfills the hypotheses of part (i) with $g=g_1$ and $Z$ replaced by $-(g_1-g_2)H_2$, its estimate gives
\[
\|\tilde h\|_{C^1} \le C(\varphi_0)\bigl(2\varphi_0 e^{2\varphi_0\|g_1\|_{C^0}}(1+\|g_1\|_{C^0}) + 1\bigr)
   \|H_2\|_{C^0}\,\|g_1-g_2\|_{C^0}.
\]
Using \eqref{eqn:lm2EstH} for $h_{2}$ we have
\[
\|H_2\|_{C^0} \le \bigl(2\varphi_0 e^{2\varphi_0\|g_2\|_{C^0}}(1+\|g_2\|_{C^0}) + 1\bigr)\|Z\|_{C^0}.
\]
Multiplying the two inequalities gives the desired estimate.
\end{proof}

From \eqref{eqn:BVPKernel}, the constant $C(\varphi_{0})$ in Lemma~\ref{lm:ODEEst} is controlled by the $C^{1}$-norm of the kernel
$K(\varphi,s)=\frac{\varphi\sin(\varphi s)}{\sin\varphi}$ on $[0,\varphi_{0}]\times[0,1]$.
It is finite for every $\varphi_{0}\in(0,\pi)$, remains bounded as $\varphi_{0}\to0$,
and blows up as $\varphi_{0}\to\pi$.

\begin{proof}[\textbf{Proof of Theorem~\ref{thm:A} (swirl case)}]
\underline{Step 1.} \emph{Estimate of $h$.}
For $\BF = \frac{1}{\rho^3}(X(\varphi)\Be_\rho + Y(\varphi)\Be_\varphi + Z(\varphi)\Be_\theta)$, \eqref{eqn:sphericalcurl} gives
\[
\|(\curl\, \BF)^{\tan}\|_{C^0(\mathbb{S}^2 \cap \R^3_+)} = \bigl\|2Z\Be_{\varphi} - (X'+2Y)\Be_{\theta}\bigr\|_{C^0(\mathbb{S}^2 \cap \R^3_+)},\quad \text{comparable to }2\|Z\|_{C^0[0,\frac{\pi}{2}]} + \|X'+2Y\|_{C^0[0,\frac{\pi}{2}]}.
\]
Fix $g\in C^1[0,\pi/2]$ and $Z\in C^0[0,\pi/2]$. Lemma~\ref{lm:ODEEst}(i) applied to
\begin{equation}\label{eqn:swirlh1}
(h' + h\cot\varphi)' = g(h' + h\cot\varphi) - Z, \quad \vp \in [0,\tfrac{\pi}{2}],\qquad h(0)=h(\tfrac{\pi}{2})=0,
\end{equation}
yields a unique solution $h\in C^1[0,\pi/2]$ with
\begin{equation}\label{eqn:swirlEst1}
\|h\|_{C^1[0,\frac{\pi}{2}]} \le C(\|g\|_{C^1[0,\frac{\pi}{2}]})\,\|Z\|_{C^0[0,\frac{\pi}{2}]},
\end{equation}
where the constant $C(\cdot)$ depends only on $\|g\|_{C^1[0,\frac{\pi}{2}]}$.

\noindent\underline{Step 2.} \emph{Estimate of $N$.}
We now estimate $N$ by merging Steps~1 and~2 of the swirl-free proof with the additional terms coming from the swirl component~$h$.
Define
\[
N(t) = -\frac{1}{(1-t^2)^2}\int_t^1 J(s)\,\mathrm{d}s,
\quad 
J(t) = \int_0^{\varphi} \bigl(G(\tau)+2\mathcal{C}\bigr)\sin\tau\,\mathrm{d}\tau,
\]
with $t=\cos\varphi$ and $\mcC$ from \eqref{eqn:swirlconstC}, and 
\begin{align}\label{eqn:coneG}
G(\varphi) = X(\varphi) + h^2(\varphi) + 2\int_0^{\varphi} \bigl( Y(\tau) + h^2(\tau)\cot\tau \bigr)\,\mathrm{d}\tau.
\end{align}
Exactly as in the swirl-free case,  \eqref{eqn:NandJprime} and \eqref{eqn:Gand2C} give
\[
\|N\|_{C^0[0,1]} \le \frac12 \|G+2\mathcal{C}\|_{C^0[0,\frac{\pi}{2}]}
\le \frac12 \sup_{0\le\tau,\varphi\le\pi/2} |G(\varphi)-G(\tau)|
\le \frac{\pi}{4} \|G'\|_{C^0[0,\frac{\pi}{2}]},
\]
and we continue to keep the factor $\pi/2$ explicit for the extension to a cone in Section~\ref{sec:Discussions}.
Using $h(0)=0$, $\varphi\cot\varphi\le1$ on $[0,\pi/2]$ and the estimate~\eqref{eqn:swirlEst1} for $h$, we obtain
\begin{align*}
|2h(\varphi)h'(\varphi)|
&\le 2\varphi\|h'\|_{C^{0}[0,\frac{\pi}{2}]}^{2}
\le \pi C(\|g\|_{C^{1}[0,\frac{\pi}{2}]})\|Z\|_{C^{0}[0,\frac{\pi}{2}]}^{2},
\end{align*}
and 
\begin{align*}
|2h^{2}(\varphi)\cot\varphi|
&\le 2\varphi^{2}\|h'\|_{C^{0}[0,\frac{\pi}{2}]}^{2}\cot\varphi
\le \pi C(\|g\|_{C^{1}[0,\frac{\pi}{2}]})\|Z\|_{C^{0}[0,\frac{\pi}{2}]}^{2}.
\end{align*}
Hence
\[
\|G'\|_{C^{0}[0,\frac{\pi}{2}]}
\le \|X'+2Y\|_{C^{0}[0,\frac{\pi}{2}]} + \pi C(\|g\|_{C^{1}[0,\frac{\pi}{2}]})\|Z\|_{C^{0}[0,\frac{\pi}{2}]}^{2},
\]
and therefore
\begin{equation}\label{eqn:swirlEstN}
\|N\|_{C^{0}[0,1]}
\le \frac{\pi}{4}\,\|X'+2Y\|_{C^{0}[0,\frac{\pi}{2}]}
+ \frac{\pi^{2}}{4}\,C(\|g\|_{C^{1}[0,\frac{\pi}{2}]})\,\|Z\|_{C^{0}[0,\frac{\pi}{2}]}^{2}.
\end{equation}

\noindent\underline{Step 3.} \emph{Contraction and existence.}
Consider the Riccati problem
\[
\frac{\rmd M}{\rmd t} + \frac12 M^2 = N, \quad t \in [0,1],\qquad M(0)=0.
\]
Set $\mathfrak{Y}_3 := \{ g\in C^1[0,\frac{\pi}{2}] : g(0)=g(\pi/2)=0,\ \|g\|_{C^1[0,\frac{\pi}{2}]}\le 3\}$.
For any $g\in\mathfrak{Y}_{3}$, estimate~\eqref{eqn:swirlEstN} shows that by taking $\|X'+2Y\|_{C^{0}[0,\frac{\pi}{2}]}$ and $\|Z\|_{C^{0}[0,\frac{\pi}{2}]}$ sufficiently small, we can ensure $\|N\|_{C^{0}[0,1]}<\frac14$.
By Lemma~\ref{lm:Riccati}(i) there exists a unique $M\in C^{1}[0,1]$ with $\|M\|_{C^{1}[0,1]}<1$.
Define $Tg(\varphi) := M(\cos\varphi)\sin\varphi$.
The mapping $T$ is obtained through the following steps:
\begin{figure}[htbp]
\centering
\begin{tikzpicture}[>=Stealth, node distance=2cm, auto,
    plain/.style={font=\small, inner sep=2pt}]
    \node[plain] (g)  {$g$};
    \node[plain] (h)  [right of=g]  {$h$};
    \node[plain] (N)  [right of=h]  {$N$};
    \node[plain] (M)  [right of=N]  {$M$};
    \node[plain] (Tg) [right of=M]  {$Tg = M\sin\varphi$};
    \draw[->] (g)  -- (h)  node[midway, above] {\small Step 1};
    \draw[->] (h)  -- (N)  node[midway, above] {\small Step 2};
    \draw[->] (N)  -- (M)  node[midway, above] {\small Riccati};
    \draw[->] (M)  -- (Tg);
\end{tikzpicture}
\caption{Construction of the map $T$ in the swirling case.}
\end{figure}

\noindent
Then $Tg(0)=Tg(\frac\pi2)=0$ and $\|Tg\|_{C^{1}[0,\frac{\pi}{2}]}\le 3$; hence $T$ maps $\mathfrak{Y}_{3}$ into itself.

For $g_1,g_2\in\mathfrak{Y}_3$, let $h_1,h_2$ be the corresponding solutions of \eqref{eqn:swirlh1}. Lemma~\ref{lm:ODEEst} gives
\[
\|h_i\|_{C^1[0,\frac{\pi}{2}]}\le C(3)\|Z\|_{C^0[0,\frac{\pi}{2}]},\qquad
\|h_1-h_2\|_{C^1[0,\frac{\pi}{2}]}\le C(3)^2\|Z\|_{C^0[0,\frac{\pi}{2}]}\|g_1-g_2\|_{C^0[0,\frac{\pi}{2}]}.
\]
Denote by $N_i, G_i,\mcC_i$ the quantities corresponding to $h_i$.
Repeating the estimates of Step~2, we obtain
\begin{align*}
\|N_1-N_2\|_{C^0[0,1]} 
&\le \frac12 \|(G_1+2\mcC_1)-(G_2+2\mcC_2)\|_{C^0[0,\frac{\pi}{2}]}\\
&\le \frac12 \sup_{0\le\tau,\varphi\le\pi/2} |(G_1-G_2)(\varphi)-(G_1-G_2)(\tau)|\\
&\le \frac{\pi}{4} \|(G_1-G_2)'\|_{C^0[0,\frac{\pi}{2}]},
\end{align*}
where we used \eqref{eqn:NandJprime} and \eqref{eqn:Gand2C}.
The derivative of the difference is
\begin{align*}
(G_1-G_2)' &= 2(h_1h_1'-h_2h_2') + 2(h_1^2-h_2^2)\cot\varphi\\
&=2(h_1-h_2)h_1'+ 2 h_2(h_1-h_2)' + 2(h_1-h_2) (h_1+h_2)\cot\varphi,
\end{align*}
which gives
\[
\|(G_1-G_2)'\|_{C^0[0,\frac{\pi}{2}]}\le 2\pi(\|h_1\|_{C^1[0,\frac{\pi}{2}]}+\|h_2\|_{C^1[0,\frac{\pi}{2}]})\|h_1-h_2\|_{C^1[0,\frac{\pi}{2}]}.
\]
Here we used $|h_1-h_2| \le \frac{\pi}{2} \|(h_1-h_2)'\|_{C^0[0,\frac{\pi}{2}]}$, $|h_2| \le \frac{\pi}{2} \|h_2'\|_{C^0[0,\frac{\pi}{2}]}$, and $|h_i\cot\varphi|\le \|h_i'\|_{C^0[0,\frac{\pi}{2}]}$ for $i=1,2$.
Substituting the $h$-estimates from above,
\[
\|(G_1-G_2)'\|_{C^0[0,\frac{\pi}{2}]}\le 4 C(3)^3 \pi \|Z\|_{C^0[0,\frac{\pi}{2}]}^2\|g_1-g_2\|_{C^0[0,\frac{\pi}{2}]},
\]
and therefore
\[
\|N_1-N_2\|_{C^0[0,1]}
\le C(3)^3  \pi^2 \|Z\|_{C^0[0,\frac{\pi}{2}]}^2\|g_1-g_2\|_{C^0[0,\frac{\pi}{2}]}.
\]

The smallness condition already imposed gives $\|N_{i}\|_{C^{0}}\le\frac14$; Lemma~\ref{lm:Riccati}(ii) then implies
\[
\|M_{1}-M_{2}\|_{C^{1}[0,1]}\le 4\|N_{1}-N_{2}\|_{C^{0}[0,1]}
\le 4 C(3)^{3} \pi^{2} \|Z\|_{C^{0}[0,\frac{\pi}{2}]}^{2}\|g_{1}-g_{2}\|_{C^{0}[0,\frac{\pi}{2}]}.
\]
Since $Tg(\varphi)=M(\cos\varphi)\sin\varphi$, it follows that
\begin{align}\label{eqn:swirlContraction}
\|Tg_1-Tg_2\|_{C^1[0,\frac{\pi}{2}]}\le 12  C(3)^3 \pi^2 \|Z\|_{C^0[0,\frac{\pi}{2}]}^2\|g_1-g_2\|_{C^0[0,\frac{\pi}{2}]}.
\end{align}
By further shrinking $\|Z\|_{C^{0}[0,\frac{\pi}{2}]}$ we can enforce $12\pi^{2} C(3)^{3}\|Z\|_{C^{0}[0,\frac{\pi}{2}]}^{2}<1$,
which makes $T$ a contraction on $\mathfrak{Y}_{3}$. Thus $T$ possesses a unique fixed point $g\in\mathfrak{Y}_{3}$.  Let $h$ be the unique solution of \eqref{eqn:swirlh1} associated with this $g$.  
Define $f = -(g' + g\cot\varphi)$ and recover $P$ from \eqref{eqn:eqnforP}; then $f,g,h\in C^{0}$ and $P\in C^{0}$.  
Exactly as in the swirl-free case, the velocity field $\Bu = \rho^{-1}(f\Be_\rho + g\Be_\varphi + h\Be_\theta)$ and the pressure $p = \rho^{-2}P$ satisfy the Navier--Stokes equations \eqref{eqn:NSEhfD} in the distributional sense.  
Standard Stokes regularity (see \cite[Chapter~IV]{Galdi11}) then yields $\Bu\in C^{2,\alpha}(\R^{3}_{+})$ and $p\in C^{1,\alpha}(\R^{3}_{+})$ for every $\alpha\in(0,1)$.
\end{proof}

\section{Some discussions}\label{sec:Discussions}

In this section we show that the analysis carried out for the half-space extends naturally to a solid cone.  
For $\varphi_0\in(0,\pi)$, let
\begin{equation}\label{eqn:defsolidcone}
\mathcal{K}_{\varphi_0}= \{ (\rho,\theta,\varphi): \rho>0,\; \theta\in(0,2\pi],\; \varphi\in[0,\varphi_0] \}
\end{equation}
denote the solid cone with opening angle $\varphi_0$. 
The stationary Navier--Stokes equations \eqref{eqn:NSEhfD} on $\mathcal{K}_{\varphi_0}$ enjoy the same scaling invariance 
$\Bu(\Bx)\to\lambda\Bu(\lambda\Bx)$, and since $\mathcal{K}_{\pi/2}=\R^3_+$, it is natural to study axisymmetric self-similar solutions on such conical domains.

Inserting
\[
\Bu = \frac{f(\varphi)}{\rho}\Be_\rho + \frac{g(\varphi)}{\rho}\Be_\varphi + \frac{h(\varphi)}{\rho}\Be_\theta,\quad
p = \frac{P(\varphi)}{\rho^2},\quad
\BF = \frac{1}{\rho^3}\bigl(X(\varphi)\Be_\rho + Y(\varphi)\Be_\varphi + Z(\varphi)\Be_\theta\bigr)
\]
into the Navier--Stokes equations yields the same system \eqref{eqn:hDgeneral1} on $[0,\varphi_0]$, but the boundary conditions become
\begin{equation}\label{eqn:coneBC}
f'(0)=g(0)=h(0)=0,\qquad
f(\varphi_0)=g(\varphi_0)=h(\varphi_0)=0.
\end{equation}

\subsection{Reduction of the system.}

The reduction follows that in Sections~\ref{sec:reductNo-swirl} and~\ref{sec:reduct-swirl}, with $\pi/2$ replaced by $\varphi_0$.

If $Z\equiv0$, the same argument as before shows that $h\equiv0$: setting $H=h'+h\cot\varphi$, integrating $H\sin\varphi=(h\sin\varphi)'$ over $[0,\varphi_0]$, and using $h(\varphi_0)=0$ yields $\int_0^{\varphi_0} H\sin\varphi\,\mathrm{d}\varphi=0$, which forces $H\equiv0$ and hence $h\equiv0$.  Thus a swirl-free force produces a swirl-free solution; below we treat the general case $Z\not\equiv0$.

From \eqref{eqn:hDgeneral1}$_{1,2,4}$ we obtain, on $[0,\varphi_0]$,
\[
-f'\sin\varphi + fg\sin\varphi + 2g\sin\varphi
= \int_0^{\varphi} \bigl( G(\tau) + 2\mcC_{\varphi_0} \bigr) \sin\tau \,\rmd\tau,
\]
with $G(\varphi) = X(\varphi) + h^2(\varphi) + 2\int_0^{\varphi} \bigl( Y(\tau) + h^2(\tau)\cot\tau \bigr) \,\rmd\tau$ and a constant $\mcC_{\varphi_0}$.
Letting $L(t)=g(\varphi)\sin\varphi$ on $[t_0,1]$ and $J(t)=\int_0^\varphi (G+2\mcC_{\varphi_0})\sin\tau\,\rmd\tau$, where $t=\cos\varphi$ and $t_0=\cos\varphi_0$, the equation becomes
\[
(1-t^2)\frac{\mathrm{d}^2 L}{\mathrm{d}t^2} + L\frac{\mathrm{d}L}{\mathrm{d}t} + 2L = J \quad\text{on } [t_0,1].
\]
The boundary conditions \eqref{eqn:coneBC} read $L(t_0)=L(1)=\frac{\mathrm{d}L}{\mathrm{d}t}(t_0)=0$.  Integrating from $t$ to $1$ gives
\[
(1-t^2)\frac{\mathrm{d}L}{\mathrm{d}t} + \frac12 L^2 + 2tL = -\int_t^1 J(s)\,\rmd s,
\]
and evaluating at $t=t_0$ forces $\int_{t_0}^1 J(s)\,\rmd s = 0$, which determines
\begin{equation}\label{eq:coneC}
\mcC_{\varphi_0} = -\frac{1}{(1-\cos\varphi_0)^2}\int_0^{\varphi_0} G(\tau)\sin\tau\,(\cos\tau-\cos\varphi_0)\,\rmd\tau .
\end{equation}
Defining $M(t)=L(t)/(1-t^2)$ yields the Riccati equation
\begin{equation}\label{eq:coneRiccati}
\frac{\mathrm{d}M}{\mathrm{d}t} + \frac12 M^{2} = N,\quad t\in[t_0,1],\qquad M(t_0)=0,
\end{equation}
where $N(t)=-\frac{1}{(1-t^2)^2}\int_t^1 J(s)\,\rmd s$ is continuous on $[t_0,1]$ since $\lim\limits_{t\to1^-}N(t)=-\frac18(X(0)+2\mcC_{\varphi_0})$.
The swirl component $h$ satisfies
\begin{equation}\label{eqn:coneSwirlh}
(h'+h\cot\varphi)' = g\,(h'+h\cot\varphi) - Z,\quad \varphi\in[0,\varphi_0],\qquad h(0)=h(\varphi_0)=0,
\end{equation}
with $g(\varphi)=M(\cos\varphi)\sin\varphi$, thereby coupling the two equations.

If $\BF\equiv0$, then $Z\equiv0$ and the argument above gives $h\equiv0$; hence $G\equiv0$ and $\mathcal{C}_{\varphi_0}=0$ by \eqref{eq:coneC}.  Consequently $N\equiv0$ in \eqref{eq:coneRiccati}, and the Riccati equation reduces to $\frac{\mathrm{d}M}{\mathrm{d}t}+\frac12 M^{2}=0$ with $M(t_{0})=0$, whose unique solution is $M\equiv0$.  Thus $f=g=0$ and $P\equiv0$, and we obtain the following rigidity result.

\begin{corollary}
The only axisymmetric self-similar solution of \eqref{eqn:NSEhfD} on $\mathcal{K}_{\varphi_0}$ with $\BF\equiv0$ is $\Bu\equiv0$.
\end{corollary}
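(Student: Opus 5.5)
The plan is to show that any axisymmetric self-similar solution on $\mathcal{K}_{\varphi_0}$ with $\BF\equiv0$ has trivial angular profiles $(f,g,h,P)$. We use the cone reduction just carried out, which is valid once the profiles have the local regularity used throughout the paper.

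\emph{Step 1: the swirl vanishes.} Since $Z\equiv0$, equation \eqref{eqn:coneSwirlh} gives $H' = gH$ for $H=h'+h\cot\varphi$. Hence $H$ has one sign on $[0,\varphi_0]$, because $H(\varphi)=H(0)\exp\int_0^\varphi g$. On the other hand, $\int_0^{\varphi_0}H\sin\varphi\,\rmd\varphi = h(\varphi_0)\sin\varphi_0=0$. Together these force $H\equiv0$. Then $(h\sin\varphi)'=0$ and $h(0)=0$ give $h\equiv0$.

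\emph{Step 2: the Riccati data vanish.} With $h\equiv0$ and $X=Y=0$, \eqref{eqn:coneG} gives $G\equiv0$. Formula \eqref{eq:coneC} then gives $\mcC_{\varphi_0}=0$. It follows that $J\equiv0$, and therefore $N\equiv0$ on $[t_0,1]$.

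\emph{Step 3: uniqueness for the Riccati problem.} The problem \eqref{eq:coneRiccati} becomes $\frac{\rmd M}{\rmd t}=-\frac12M^2$ with $M(t_0)=0$. The right-hand side is locally Lipschitz, so Picard--Lindelöf gives $M\equiv0$ as the unique solution; alternatively one can invoke Lemma~\ref{lm:Riccati} with $N=0$. Consequently $g=M(\cos\varphi)\sin\varphi\equiv0$. From the continuity equation, $f=-(g'+g\cot\varphi)\equiv0$. Finally, \eqref{eqn:eqnforP}, with the cone constant, gives $P=f+\mcC_{\varphi_0}=0$, so $\Bu\equiv0$ (and $p\equiv0$).

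The main subtlety is not the algebra but the justification of the reduction. One must check that an arbitrary axisymmetric self-similar solution, in the bounded-profile class, is regular enough up to the axis and the cone boundary. This is what makes the boundary data $f'(0)=g(0)=h(0)=0$ and the integrations from $0$ legitimate. I would handle this as in the half-space case: Stokes regularity away from the origin gives $C^{2,\alpha}$ profiles on the closed interval $[0,\varphi_0]$, and the compatibility conditions at $\varphi=0$ come from axisymmetric smoothness (cf.\ \cite{LW09}). Everything else is a direct specialization of the preceding reduction.
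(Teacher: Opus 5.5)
Your proof is correct and follows the paper's argument essentially verbatim. The sign argument for $H=h'+h\cot\varphi$ removes the swirl, after which $G\equiv0$ forces $\mathcal{C}_{\varphi_0}=0$ and $N\equiv0$; uniqueness for $\frac{\mathrm{d}M}{\mathrm{d}t}=-\frac12M^2$, $M(t_0)=0$, then gives $M\equiv0$ and hence $f=g=P=0$. Relying on Picard--Lindel\"of rather than the contraction lemma for this uniqueness is the safer choice, since that lemma's proof only establishes uniqueness within its ball; your regularity caveat is the same standing assumption the paper makes.
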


\subsection{Existence and uniqueness for small forces.}
We now give a concise proof of the existence theorem on a solid cone, following the same strategy as in Theorem~\ref{thm:A}.
Set $\ell = 1-\cos\varphi_0$ and $t_0 = \cos\varphi_0$.

Similar to the proof of Theorem~\ref{thm:A}, we need an estimate of $N$ in terms of $G'$. In the cone setting, let $t = \cos\varphi$, so that $\varphi\in[0,\varphi_0]$ corresponds to $t\in[t_0,1]$.  Define
\[
N(t) = -\frac{1}{(1-t^{2})^{2}}\int_{t}^{1} J(s)\,\mathrm{d}s,\qquad
J(t) = \int_{0}^{\varphi} \bigl(G(\tau)+2\mathcal{C}_{\varphi_0}\bigr)\sin\tau\,\mathrm{d}\tau,
\]
with $\mathcal{C}_{\varphi_0}$ given by \eqref{eq:coneC}.
Introduce the weight $w(\tau) = \frac{2}{\ell^{2}}\sin\tau(\cos\tau-\cos\varphi_0) \ge 0$, which satisfies $\int_{0}^{\varphi_0}w(\tau)\,\mathrm{d}\tau = 1$.
With this weight we obtain the representation
\begin{equation}\label{eqn:cone-Gand2C}
G(\varphi)+2\mathcal{C}_{\varphi_0} = \int_{0}^{\varphi_0} \bigl(G(\varphi)-G(\tau)\bigr) w(\tau)\,\mathrm{d}\tau .
\end{equation}
Applying \eqref{eqn:NandJprime} on $[t_{0},1]$ and using \eqref{eqn:cone-Gand2C} yields
\[
\|N\|_{C^0[t_0,1]} \le \frac{1}{2(1+t_{0})^{2}}\,\|G+2\mathcal{C}_{\varphi_0}\|_{C^{0}[0,\varphi_0]}
\le \frac{\varphi_{0}}{2(1+t_{0})^{2}}\|G'\|_{C^{0}[0,\varphi_0]} .
\]

\noindent\textbf{Case 1: Swirl-free external force.}
Here $\BF=\rho^{-3}(X\Be_\rho+Y\Be_\varphi)$ and $G=X+2\int_0^{\varphi} Y$. Equation \eqref{eqn:sphericalcurl} gives
\[
\|(\curl\,\BF)^{\tan}\|_{C^0(\mathbb{S}^{2}\cap\mathcal{K}_{\varphi_0})} = \|X'+2Y\|_{C^0[0,\varphi_0]} = \|G'\|_{C^0[0,\varphi_0]}.
\]
By Lemma~\ref{lm:Riccati}(i), a unique solution $M\in C^{1}[t_0,1]$ exists provided $\|N\|_{C^{0}}<\frac{1}{2\ell^{2}}$, which is guaranteed by
\begin{equation}\label{eqn:cone-nos-exist}
\|(\curl\,\BF)^{\tan}\|_{C^0(\mathbb{S}^{2}\cap\mathcal{K}_{\varphi_0})} < \frac{(1+\cos\varphi_0)^{2}}{\varphi_0\,(1-\cos\vp_0)^{2}} =: \varepsilon_{\mathrm{sff}}(\varphi_0).
\end{equation}
When this holds, $g(\varphi) = M(\cos\varphi)\sin\varphi \in C^1[0,\varphi_0]$ and $f = -(g' + g\cot\varphi) \in C^0[0,\varphi_0]$, and the Stokes regularity theory gives
$\Bu\in C^{2,\alpha}(\mathcal{K}_{\varphi_0})$, $p\in C^{1,\alpha}(\mathcal{K}_{\varphi_0})$ for every $\alpha \in (0,1)$.

\noindent\textbf{Case 2: Swirling external force.}
Now $\BF=\rho^{-3}(X\Be_\rho+Y\Be_\varphi+Z\Be_\theta)$. Equation \eqref{eqn:sphericalcurl} shows
\[
\|(\curl\,\BF)^{\tan}\|_{C^0(\mathbb{S}^{2}\cap\mathcal{K}_{\varphi_0})} = \bigl\|2Z\Be_{\varphi} - (X'+2Y)\Be_{\theta}\bigr\|_{C^0(\mathbb{S}^2 \cap \mathcal{K}_{\varphi_0})},\quad \text{comparable to }2\|Z\|_{C^0[0,\varphi_0]} + \|X'+2Y\|_{C^0[0,\varphi_0]}.
\]
The proof follows the same contraction argument used for the swirling external force in Section~\ref{sec:ExistSwirl}.
Let
$\mathfrak{Y}_3 = \{ g\in C^1[0,\varphi_0] : g(0)=g(\varphi_0)=0,\ \|g\|_{C^1[0,\varphi_0]}\le 3 \}$.
For $g\in\mathfrak{Y}_3$, let $h$ be the unique solution of \eqref{eqn:coneSwirlh} supplied by Lemma~\ref{lm:ODEEst}. Then
\[
\|h\|_{C^1[0,\varphi_0]} \le C_{\mathrm{h}}(\varphi_0)\,\|Z\|_{C^0[0,\varphi_0]},
\]
where $C_{\mathrm{h}}$ remains bounded as $\varphi_0\to0$.
Using $G$ as in \eqref{eqn:coneG} and replacing $\pi/2$ by $\varphi_0$ in the Step~2 estimates of Section~\ref{sec:ExistSwirl} yields
\[
\|G'\|_{C^0[0,\varphi_0]} \le \|X'+2Y\|_{C^0[0,\varphi_0]} + \varphi_0\,C_{\rm h}(\varphi_0)^2\,\|Z\|_{C^0[0,\varphi_0]}^{2}.
\]
Consequently
\begin{equation}\label{eq:coneNest}
\|N\|_{C^0[t_0,1]} \le \frac{\varphi_0}{2(1+\cos\varphi_0)^{2}}\,\|X'+2Y\|_{C^0[0,\varphi_0]}
+ \frac{\varphi_0^{2}}{2(1+\cos\varphi_0)^{2}}\,C_{\rm h}(\varphi_0)^2\,\|Z\|_{C^0[0,\varphi_0]}^{2}.
\end{equation}

\emph{Self-mapping.}  
If $\|N\|_{C^0[t_0,1]}<\frac{1}{2\ell^{2}}$, Lemma~\ref{lm:Riccati}(i) gives a unique solution $M\in C^1[t_0,1]$ with $\|M\|_{C^1[t_0,1]}\le 2(\ell+1)\|N\|_{C^0[t_0,1]}$.  
Define $Tg(\varphi):=M(\cos\varphi)\sin\varphi$; a direct computation shows
\[
\|Tg\|_{C^1[0,\vp_0]}\le 3\|M\|_{C^1[t_0,1]}\le 6(\ell+1)\|N\|_{C^0[t_0,1]}.
\]
Thus $Tg\in\mathfrak{Y}_3$ as soon as $\|N\|_{C^0[t_0,1]}\le \frac{1}{2(\ell+1)}$.  
To prepare for the subsequent contraction argument we enforce the slightly stronger bound $\|N\|_{C^0[t_0,1]}\le\frac{1}{4\ell^{2}}$.  
By \eqref{eq:coneNest}, taking $\|X'+2Y\|_{C^0[0,\vp_0]}$ and $\|Z\|_{C^0[0,\vp_0]}$ sufficiently small guarantees
\[
\|N\|_{C^0[t_0,1]}\le \min\Bigl\{\frac{1}{2(\ell+1)},\,\frac{1}{4\ell^{2}}\Bigr\},
\]
and therefore $T$ maps $\mathfrak{Y}_3$ into itself.

\emph{Contraction.}  Repeating the difference estimates of Step~3 in Section~\ref{sec:ExistSwirl} on $[0,\varphi_0]$ and using Lemma~\ref{lm:ODEEst}(ii), we obtain
\[
\|Tg_1-Tg_2\|_{C^1[0,\vp_0]} \le K(\vp_0)\varphi_0^{2}\,\|Z\|_{C^0[0,\vp_0]}^{2}\,\|g_1-g_2\|_{C^0[0,\vp_0]}.
\]
Here the constant $K(\vp_0)$ remains bounded as $\varphi_0\to0$. Consequently, $T$ is a contraction on $\mathfrak{Y}_3$ provided $\|Z\|_{C^0[0,\varphi_0]} \le  (\sqrt{2\,K(\varphi_0)} \vp_0)^{-1}$.

Therefore, there exists a threshold $\varepsilon_{\mathrm{sf}}(\varphi_0)>0$ such that whenever
\[
\|X'+2Y\|_{C^0[0,\varphi_0]} + 2\|Z\|_{C^0[0,\varphi_0]} < \varepsilon_{\mathrm{sf}}(\varphi_0),
\]
the map $T$ maps $\mathfrak{Y}_3$ into itself and is a contraction on $\mathfrak{Y}_3$.
Hence $T$ admits a unique fixed point $g\in\mathfrak{Y}_3$, which together with the associated $h$ and $M$ solves the coupled ODE system.
By the Stokes regularity theory we obtain $\Bu\in C^{2,\alpha}(\mathcal{K}_{\varphi_0})$ and $p\in C^{1,\alpha}(\mathcal{K}_{\varphi_0})$ for every $\alpha\in(0,1)$.

From the above discussion we immediately obtain the following existence theorem on a solid cone.

\begin{theorem}[Existence on a solid cone]\label{thm:A-cone}
Let $\varphi_0\in(0,\pi)$ and let $\mathcal{K}_{\varphi_0}$ be the solid cone defined in \eqref{eqn:defsolidcone}.
Let $\BF$ be an axisymmetric vector field of class $C^1$, homogeneous of degree $-3$ on $\mathcal{K}_{\vp_0}$, and let $(\curl\,\BF)^{\tan}$ denote the tangential component of $\curl\,\BF$ on the unit sphere $\mathbb{S}^2$.
Then there exists $\varepsilon(\vp_0)>0$ such that if
\[
\|(\curl\,\BF)^{\tan}\|_{C^0(\mathbb{S}^2\cap\mathcal{K}_{\varphi_0})} \le \varepsilon(\varphi_0),
\]
the system \eqref{eqn:NSEhfD} admits a unique axisymmetric self-similar solution, small in $C^0(\mathcal{K}_{\varphi_0})$, with $\Bu\in C^{2,\alpha}(\mathcal{K}_{\varphi_0})$ and $p\in C^{1,\alpha}(\mathcal{K}_{\varphi_0})$ for every $\alpha\in(0,1)$.

Moreover, if $\BF$ has no swirl, then any axisymmetric self-similar solution is necessarily swirl-free and unique.
\end{theorem}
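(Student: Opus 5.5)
The plan is to assemble the theorem from the reduction and estimates carried out above on $[0,\varphi_0]$, following the proof of Theorem~\ref{thm:A} with $\pi/2$ replaced by $\varphi_0$ and $t_0=\cos\varphi_0$, $\ell=1-\cos\varphi_0$.

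\emph{Step 1: the threshold.} Since $(\curl\,\BF)^{\tan}=\rho^{-4}\bigl(2Z\Be_\varphi-(X'+2Y)\Be_\theta\bigr)$, the two tangential components are orthogonal. Hence
\[
\|(\curl\,\BF)^{\tan}\|_{C^0(\mathbb{S}^2\cap\mathcal{K}_{\varphi_0})}\le\varepsilon
\]
implies both $\|X'+2Y\|_{C^0[0,\varphi_0]}\le\varepsilon$ and $2\|Z\|_{C^0[0,\varphi_0]}\le\varepsilon$. I would therefore set
\[
\varepsilon(\varphi_0):=\tfrac12\min\bigl\{\varepsilon_{\mathrm{sff}}(\varphi_0),\varepsilon_{\mathrm{sf}}(\varphi_0)\bigr\},
\]
where $\varepsilon_{\mathrm{sff}}(\varphi_0)$ is the swirl-free threshold \eqref{eqn:cone-nos-exist} and $\varepsilon_{\mathrm{sf}}(\varphi_0)$ is the swirling threshold obtained above. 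With this choice the swirl-free case falls under Case~1 and the general case under Case~2.

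\emph{Step 2: existence.} In Case~1, Lemma~\ref{lm:Riccati}(i) on $[t_0,1]$ gives a unique $M$. In Case~2, the map $T$ is a contraction on $\mathfrak{Y}_3$, which rests on Lemma~\ref{lm:ODEEst} for $h$, estimate \eqref{eq:coneNest} for $N$, and Lemma~\ref{lm:Riccati}(ii) for the Lipschitz bound; its fixed point gives $g$. Then I recover
\[
g=M(\cos\varphi)\sin\varphi,\qquad f=-(g'+g\cot\varphi),
\]
together with $h$ and $P$. Continuity of $g\cot\varphi$ at $\varphi=0$ follows from $g(0)=0$ exactly as in the half-space proof, now using that $\varphi\cot\varphi$ is bounded on $[0,\varphi_0]$ for $\varphi_0<\pi$. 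Local Stokes regularity up to the flat-in-angle boundary away from the vertex then yields the stated $C^{2,\alpha}$ and $C^{1,\alpha}$ regularity. Smallness in $C^0$ follows from $\|M\|_{C^1}\le 2(\ell+1)\|N\|$, from $\|N\|\lesssim\varepsilon$, and from $\|h\|_{C^1}\lesssim\|Z\|$.

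\emph{Step 3: uniqueness and the swirl-free statement.} Uniqueness holds among small solutions, meaning those whose angular profile $g$ lies in $\mathfrak{Y}_3$. Any such solution reduces, via the computation of the reduction subsection, to a fixed point of $T$ (respectively, to a solution of the Riccati problem), so it coincides with the constructed one. For the last claim, suppose $Z\equiv0$. The argument with $H=h'+h\cot\varphi$ gives $H'=gH$, so $H$ has constant sign, while $\int_0^{\varphi_0}H\sin\varphi\,\rmd\varphi=h(\varphi_0)\sin\varphi_0=0$. Hence $H\equiv0$, and $h(0)=0$ forces $h\equiv0$. The solution then reduces to the Riccati initial value problem \eqref{eq:coneRiccati}, whose solution is unique by ODE theory. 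This argument needs no smallness, so every axisymmetric self-similar solution is swirl-free and unique.

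\emph{Main obstacle.} I expect the main difficulty to be justifying that an arbitrary solution in the class is regular enough at the axis, i.e.\ that $f'(0)=g(0)=h(0)=0$ holds and the integrations in the reduction are legitimate. I would handle this by invoking the compatibility result \cite{LW09} and the local regularity near the axis, as was done in the half-space case.
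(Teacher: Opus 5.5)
Your proposal is correct and follows the paper's own argument. You reduce to the Riccati problem \eqref{eq:coneRiccati} on $[t_0,1]$, apply Lemma~\ref{lm:Riccati} directly in the swirl-free case, and use the contraction of $T$ on $\mathfrak{Y}_3$ (via Lemma~\ref{lm:ODEEst} and \eqref{eq:coneNest}) in the swirling case; you then recover $f$ and $P$, invoke local Stokes regularity, and use the $H=h'+h\cot\varphi$ argument for swirl-free rigidity and uniqueness, all as the paper does.

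The one slip is cosmetic and sits in your threshold. The suprema of $|X'+2Y|$ and of $2|Z|$ may both equal $\varepsilon(\varphi_0)$, so the factor $\tfrac12$ only gives $\|X'+2Y\|_{C^0}+2\|Z\|_{C^0}\le\varepsilon_{\mathrm{sf}}(\varphi_0)$. Case~2 needs the strict inequality, so take, e.g., $\tfrac13$ instead.
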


\begin{remark}
For a swirl-free external force, we have 
$\varepsilon_{\mathrm{sff}}(\varphi_0)=\frac{(1+\cos\varphi_0)^2}{\varphi_0\,(1-\cos\varphi_0)^{2}}\sim 16\,\varphi_0^{-5}$ as $\varphi_0\to0$.
In the swirling case, for small $\varphi_0$ the self-mapping and contraction requirements together force both $\|X'+2Y\|_{C^0[0,\vp_0]}$ and $\|Z\|_{C^0[0,\vp_0]}$ to be at most of order $\varphi_0^{-1}$; consequently $\varepsilon_{\mathrm{sf}}(\varphi_0)\sim c\,\varphi_0^{-1}$.
Thus, in either case, $\varepsilon(\varphi_0)\to\infty$ as $\varphi_0\to0$, so for a sufficiently narrow cone the smallness condition on $\|(\curl\,\BF)^{\tan}\|_{C^0(\mathbb{S}^2\cap\mathcal{K}_{\varphi_0})}$ is weakened,
and a unique solution exists even for a very large tangential curl.
\end{remark}


\subsection{Blow-up criterion.}
By repeating the arguments of Section~\ref{sec:blowup} line by line, which rely only on the Riccati equation on a general interval $[t_0,1]$, we immediately obtain the following blow-up criterion on a solid cone.

\begin{theorem}[Blow-up criterion on a solid cone]
Let $\varphi_0\in(0,\pi)$ and let $\BF$ be an axisymmetric, swirl-free external force on $\mathcal{K}_{\varphi_0}$, of class $C^{1}$ and homogeneous of degree $-3$.
There exist constants $\underline{\lambda}<0<\overline{\lambda}$ (possibly $\pm\infty$) with the following property: for every $\lambda\in(\underline{\lambda},\overline{\lambda})$ the Navier--Stokes equations \eqref{eqn:NSEhfD} with $\mathbb{R}^{3}_{+}$ replaced by $\mathcal{K}_{\varphi_0}$ and with external force $\lambda\BF$ admit a unique axisymmetric self-similar solution
$\Bu\in C^{2,\alpha}_{\mathrm{loc}}(\overline{\mathcal K_{\varphi_0}}\setminus\{0\})$, $p\in C^{1,\alpha}_{\mathrm{loc}}(\overline{\mathcal K_{\varphi_0}}\setminus\{0\})$ for every $\alpha\in(0,1)$, whereas no such bounded-profile solution exists for $\lambda\notin(\underline{\lambda},\overline{\lambda})$.

In spherical coordinates we write $\BF = \rho^{-3}\bigl( X(\varphi)\Be_{\rho} + Y(\varphi)\Be_{\varphi} \bigr)$.
Then the finiteness of $\underline{\lambda}$ and $\overline{\lambda}$ is determined by the sign of the discriminant
\begin{align*}
\mathcal{N}_{\vp_0}(\varphi)=\int_{0}^{\varphi}
\bigl(G(\tau)+2\mathcal{C}_{\varphi_0}\bigr)\,\sin\tau\,(\cos\varphi-\cos\tau)\,\rmd\tau,
\quad \varphi \in [0,\varphi_0],
\end{align*}
where $G(\varphi)=X(\varphi)+2\int_{0}^{\varphi} Y(\tau)\,\rmd\tau$ and $\mathcal{C}_{\varphi_0}$ is defined by \eqref{eq:coneC}. More precisely:
\begin{itemize}
\item if $\mathcal{N}_{\vp_0}\ge0$ and $\mathcal{N}_{\vp_0}\not\equiv0$, then $\underline{\lambda}\in(-\infty,0)$ and $\overline{\lambda}=+\infty$;

\item if $\mathcal{N}_{\vp_0}\le0$ and $\mathcal{N}_{\vp_0}\not\equiv0$, then $\underline{\lambda}=-\infty$ and $\overline{\lambda}\in(0,+\infty)$;

\item if $\mathcal{N}_{\vp_0}$ changes sign, then $\underline{\lambda}\in(-\infty,0)$ and $\overline{\lambda}\in(0,+\infty)$;
\item if $\mathcal{N}_{\vp_0}\equiv0$, then $\underline{\lambda}=-\infty$ and $\overline{\lambda}=+\infty$.
\end{itemize}
\end{theorem}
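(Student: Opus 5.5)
The plan is to follow the half-space proof of Theorem~\ref{thm:B} verbatim, now on the interval $[t_0,1]$ with $t_0=\cos\varphi_0$.

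First, the reduction. Replacing $\BF$ by $\lambda\BF$ replaces $G$ by $\lambda G$. By \eqref{eq:coneC}, this also replaces $\mathcal{C}_{\varphi_0}$ by $\lambda\mathcal{C}_{\varphi_0}$, and hence $N$ by $\lambda N$. Since the force is swirl-free, the cone reduction gives $h\equiv0$. An axisymmetric self-similar solution with bounded profile is then equivalent to a solution $M\in C^1[t_0,1]$ of
\[
\frac{\mathrm{d}M}{\mathrm{d}t}+\frac12 M^2=\lambda N,\qquad M(t_0)=0.
\]
This equivalence comes from $g(\varphi)=M(\cos\varphi)\sin\varphi$ and $f=-(g'+g\cot\varphi)$, with $P$ recovered from \eqref{eqn:eqnforP} using $\mathcal{C}_{\varphi_0}$. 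Local Stokes regularity then gives the stated $C^{2,\alpha}_{\mathrm{loc}}$ and $C^{1,\alpha}_{\mathrm{loc}}$ regularity, exactly as in the half-space proof. Uniqueness for each admissible $\lambda$ follows from uniqueness for the Riccati initial value problem. Continuity of $N$ at $t=1$ was already checked via $\lim_{t\to1^-}N(t)=-\frac18(X(0)+2\mathcal{C}_{\varphi_0})$.

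Second, the admissible set of $\lambda$. Extend $N$ constantly to $(t_0-1,2)$ and define $\omega_\lambda$ and $\Lambda=\{\lambda:\omega_\lambda>1\}$ as in Theorem~\ref{thm:Auxiliary}. I then rerun Lemmas~\ref{lm:lm1}, \ref{lm:open} and~\ref{lm:exactblow} with $0$ replaced by $t_0$.
\begin{itemize}
\item \emph{Lemma~\ref{lm:lm1}.} The arctan comparison only needs a point $t_1\in(t_0,1)$ where $N<0$ (resp.\ $>0$), together with $t_1-\delta_1>t_0$.
\item \emph{Lemma~\ref{lm:open}, openness.} This uses $M_0\equiv0$ and lower semicontinuity of $\omega_\lambda$.
\item \emph{Lemma~\ref{lm:open}, interval property.} This uses three comparisons: $M_{\lambda_2}\le M_\lambda\le\sqrt{2|\lambda|C_1}$, the scaling $v=\lambda M_{\overline\lambda}/\overline\lambda$, and the analogous scaling at $\underline\lambda$. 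All of these are pointwise in $t$ and insensitive to the left endpoint.
\item \emph{Lemma~\ref{lm:exactblow}.} The Riccati blow-down estimate $t<t_R+4/R$ is local, so it also carries over.
\end{itemize}
Together these give: $\Lambda$ is an open interval $(\underline\lambda,\overline\lambda)\ni0$, and the four cases are determined by the sign of $N$ on $(t_0,1)$.

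Third, the translation to the discriminant. I repeat the computation \eqref{eqn:PfThmB}. Change variables $s=\cos\tau$ and integrate by parts, using $J(1)=0$:
\[
-\int_t^1 J(s)\,\mathrm{d}s=\int_0^\varphi\bigl(G(\tau)+2\mathcal{C}_{\varphi_0}\bigr)\sin\tau(\cos\varphi-\cos\tau)\,\mathrm{d}\tau=\mathcal{N}_{\varphi_0}(\varphi).
\]
Since $(1-t^2)^2>0$ on $[t_0,1)$, it follows that $\operatorname{sgn}N=\operatorname{sgn}\mathcal{N}_{\varphi_0}$. For the case $\mathcal{N}_{\varphi_0}\equiv0$, differentiating twice via the operator $\mathcal{L}$ of \eqref{eqn:discriminantN} gives $G+2\mathcal{C}_{\varphi_0}\equiv0$. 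Then $N\equiv0$, the only solution is trivial, and $\Lambda=\mathbb{R}$.

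The main obstacle is the equivalence in the first step, specifically the "no bounded-profile solution" direction. Any bounded-profile solution must make $M=g/\sin\varphi$ finite and continuous up to $t=1$, i.e.\ $\omega_\lambda>1$ after the constant extension. Here the subtle point is that $\omega_\lambda=1$ with $M$ unbounded as $t\to1^-$ must be excluded as a bounded-profile solution. I would argue this directly: boundedness of $f$ and $g$ together with $g(0)=0$ gives $|g|\lesssim\varphi$. This makes $M$ bounded near $t=1$, so the solution extends past $t=1$.
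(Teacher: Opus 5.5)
Your proposal is correct and follows the paper's own argument. The paper proves this theorem only by stating that the Riccati analysis of Section~\ref{sec:blowup} (Lemmas~\ref{lm:lm1}, \ref{lm:open}, \ref{lm:exactblow} and the sign identity \eqref{eqn:PfThmB}) carries over line by line to $[t_0,1]$, which is what you do. You also supply two details the paper leaves implicit: the linear scaling of $\mathcal{C}_{\varphi_0}$ in $\lambda$ via \eqref{eq:coneC}, and the exclusion of the endpoint case $\omega_\lambda=1$, which follows from $(g\sin\varphi)'=-f\sin\varphi$, i.e.\ $|M(t)|\le\|f\|_{C^0}/(1+t)$ for any bounded profile.
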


\medskip

{\bf Acknowledgments}The research of Wang is partially supported by NSFC grants 12671279, the Natural Science Foundation of Jiangsu Province (Grant No. BK20240147) and Jiangsu Provincial Scientific Research Center of Applied Mathematics(No. BK20233002). The research of Xie is partially supported by  NSFC grants 12571238, 12426203, and 12250710674.

\medskip
{\bf Statement and declaration.} The authors have no relevant financial or non-financial interests to disclose. No datasets were generated or analyzed during the current study.

\bibliographystyle{alpha}
\bibliography{ref}

\end{document}